\numberwithin{equation}{section}
\newtheorem{Thm}{Theorem}[section]
\newtheorem{Cor}[Thm]{Corollary}
\newtheorem{Lem}[Thm]{Lemma}
\newtheorem{Prop}[Thm]{Proposition}
\theoremstyle{definition}%boldface title, romand body
\newtheorem{Rem}[Thm]{Remark}      
\theoremstyle{definition}
\newtheorem{Defn}[Thm]{Definition}
\newtheorem{Ex}[Thm]{Example}
\newtheorem{Fact}[Thm]{Fact}
\theoremstyle{definition}%boldface title, romand body
\newtheorem*{Prob*}{Problem}
\theoremstyle{definition}
\newcommand{\Z}{\mathbb{Z}}
\newcommand{\D}{\mathbb{D}}
\newcommand{\kr}{\mathds{kr}}
\newcommand{\N}{\mathbb{N}}
\newcommand{\T}{\mathbb{T}}
\newcommand{\R}{\mathbb{R}}
\newcommand{\C}{\mathbb{C}}
\DeclareMathOperator{\Sym}{Sym}
\DeclareMathOperator{\supp}{supp}
\title[Discrete dyadic maximal function over $\ell^q$ balls: dimension-free estimates]
{Remark on dimension-free estimates for discrete maximal functions over $\ell^q$ balls: small dyadic scales}
\author{Jakub Niksiński}
\address[Jakub Niksiński]{Institute of Mathematics\\
	University of Wroclaw\\
	Plac Grunwaldzki 2\\
	50-384 Wrocław\\
	Poland}
\email{trolek1130@gmail.com}
\subjclass[2020]{42B15, 42B25}
\keywords{discrete maximal function, $\ell^q$ balls, dimension-free estimates}
\begin{document}
%%%%%%%%%%%%%%%%%%%%%%%%%%%%%%%%%%%%%%%%%%%%%%%%%%%%%%%%%%%%%%%%%%%%%%%%%%%%%%%%%%%%%%%%%%%%%%%%%%%%%%%%

\selectlanguage{english}

\begin{abstract}
We give a dimension-free bound on $\ell^p(\Z ^d)$, $p \in [2, \infty]$ for the discrete Hardy-Littlewood maximal operator over the $\ell^q$ balls in $\Z ^d$ with small dyadic radii. Our result combined with the work of Kosz, Mirek, Plewa, Wróbel gives dimension-free estimates  on $\ell^p(\Z^d)$, $p \in [2, \infty]$ for the discrete dyadic Hardy-Littlewood maximal operator over $\ell^q$ balls for $q \geq 2$.
\end{abstract}

\subjclass[2020]{42B15, 42B25}
\keywords{discrete maximal function, $\ell^q$ balls, dimension-free estimates}

\maketitle

%%%%%%%%%%%%%%%%%%%%%%%%%%%%%%%%%%%%%%%%%%%%%%%%%%%%%%%%%%%%%%%%%%%%%%%%%%%%%%%%%%%%%%%%%%%%%%%%%%%%%%%%

%%%%%%%%%%%%%%%%%%%%%%%%%%%%%%%%%%%%%%%%%%%%%%%%%%%%%%%%%%%%%%%%%%%%%%%%%%%%%%%%%%%%%%%%%%%%%%%%%%%%%%%%

\section{Introduction}
Let $G$ be a convex, bounded, closed symmetric subset of $\mathbb{R}^d$ with non-empty interior, we will call such $G$ a symmetric convex body. Natural examples are the $\ell^q$ balls:
\[
B^q= \Big\lbrace x\in \mathbb{R}^d : \| x \|_{q}=\Big( \sum_{i=1}^d |x_i|^q \Big)^{1/q} \leq 1 \Big\rbrace \ \text{for} \  q \in [1, \infty ), 
\]
\[
B^{\infty}= \Big\lbrace x \in \mathbb{R}^d : \|x \|_{\infty}= \max_{1 \leq i \leq d} |x_i| \leq 1 \Big\rbrace.
\]
With each symmetric convex body one can associate the corresponding Hardy-Littlewood averaging operator. For any $t>0$ and $x \in \mathbb{R}^d$ we define
\[
M_t^G f(x)= \frac{1}{|t \cdot G|} \int_{t \cdot G} f(x-y) \ dy, 
\]
for a locally integrable function $f$, where $t \cdot G= \lbrace tx : x \in G \rbrace$ and $|t \cdot G|$ denotes its Lebesgue measure. Now for any $p$ let $C_p(d,G)>0$ be the smallest number such that the following inequality
\[
 \| \sup_{t>0} |M_t^G f| \|_{L^p(\mathbb{R}^d)} \leq C_p(d,G) \| f \|_{L^p(\mathbb{R}^d)}
\]
holds for every $f \in L^p(\mathbb{R}^d)$. It is well known that $C_p(d,G)< \infty$ for all $p \in (1, \infty]$ and all symmetric convex bodies $G$. 
\par
In 1980s dependency of $C_p(d,G)$ on dimension $d$ has begun to be studied. Various results were obtained, but as of now the major conjecture in this topic is still open, namely that $C_p(d,G)$ can be bounded from the above by a number independent of set $G$ and dimension $d$ for each fixed $p \in (1, \infty]$. We recommend the survey article \cite{HL-cont} for a detailed exposition of the subject, which contains results that we skipped mentioning.
\par 
Similar questions can be considered for the discrete analogue of the operator $M_t^G$. For every $t>0$ and every $x \in \Z ^d $ we define the discrete Hardy-Littlewood averaging operator
\[
\mathcal{M}_t^Gf(x)= \frac{1}{| (t \cdot G) \cap \Z^d|} \sum_{ y \in t \cdot G \cap \Z^d} f(x-y),
\]
where $f: \Z^d \to \C$, $| (t \cdot G) \cap \Z^d|$ is the number of elements of the set $(t \cdot G) \cap \Z^d$. Similarly as before we define $\mathcal{C}_p(d,G)>0$ to be the smallest number such that the following inequality
\[
 \| \sup_{t>0} |\mathcal{M}_t^G f| \|_{\ell^p(\mathbb{Z}^d)} \leq \mathcal{C}_p(d,G) \| f \|_{\ell^p(\mathbb{Z}^d)}
\]
holds for every $f \in \ell^p( \Z ^d)$. Using similar methods as in the continuous case one can show that for every $p \in (1, \infty]$ and symmetric convex body we have $\mathcal{C}_p(d,G) < \infty$. 
\par
What about the dependency of $\mathcal{C}_p(d,G)$ on the dimension $d$?
We can ask similar questions as in the continuous setup, yet it turns out that the situation is much more delicate. Indeed in \cite{cubes} the authors constructed a family of ellipsoids $E(d) \subseteq \mathbb{R}^d$, with the property that for each $p \in (1, \infty]$ there exists $C_p>0$ such that for every $d \in \N$ we have
\[
\mathcal{C}_p(d, E(d)) \geq C_p ( \log d)^{1/p}.
\]
This means that if we want to establish dimension-free estimates for $\mathcal{C}_p(d, G)$ we need to restrict ourselves to specific sets $G$, which contain more symmetries; one of the simpler options are $B^q$ sets. \par Define $\mathbb{D}=\lbrace 2^n : n \in \mathbb{N}_0 \rbrace$ to be the set of dyadic numbers.
Literature in the discrete setting is not as fruitful, nevertheless there are some papers and  positive results in this regard, for example:
\begin{itemize}
    \item In \cite{cubes} it was proved that for every $p \in ( \frac{3}{2}, \infty ]$ there exists a constant $C_p>0$ such that for every $d \in \N$ and every $f \in \ell^p(\Z ^d)$ we have
    \[
    \Big\| \sup_{t>0} |\mathcal{M}_t^{B^{\infty}} f| \Big\|_{\ell^p(\Z ^d)} \leq C_p \| f \|_{\ell^p(\Z ^d)}.
    \]
    This result is almost as strong as those in the continuous case. In the case of sets $B^q$ for $q \neq \infty$ authors of papers \cite{cubes}, \cite{balls}, \cite{Bq}  could only obtain weaker conclusions. 
    \item In \cite{balls} it was proved that there exists $C>0$ such that for every $p \in [2, \infty)$, $d \in \N$ and every $f \in \ell^p(\Z ^d)$ we have
    \[
    \Big\| \sup_{t \in \D} |\mathcal{M}_t^{B^2} f| \Big\|_{\ell^p(\Z ^d)} \leq C \| f \|_{\ell^p(\Z ^d)}.
    \]
    \item In \cite{Bq} by extending methods of \cite{balls}  it was proved that for any $q \in (2, \infty)$ there exists constant $C(q)>0$ 
    such that for every $p \in [2, \infty)$, $d \in \N$ and every $f \in \ell^p(\Z ^d)$ we have
    \[
    \Big\| \sup_{t \in \D, t \geq d^{1/q}} |\mathcal{M}_t^{B^q} f| \Big\|_{\ell^p(\Z ^d)} \leq C(q) \| f \|_{\ell^p(\Z ^d)}.
    \]
     The paper \cite{Bq} did not cover the range $t<d^{1/q}$ nor $q<2$.
    \item  More recently in \cite{NW}, the author in collaboration with B. Wróbel has shown that for any $\varepsilon>0$ and there is a constant $C_{\varepsilon}>0$ such that for all $p \in [2,\infty)$, $d \in \N$, and $f \in \ell^p(\Z^d)$ we have
    \[
    \Big\| \sup_{0 \le t \le d^{\frac{1}{2}-\varepsilon}} |\mathcal{M}_t^{B^2}f| \Big\|_{\ell^p(\Z^d)} \le C_{\varepsilon} \|f \|_{\ell^p(\Z^d)}.
    \]
    
\end{itemize}
\par
In this paper we will prove the following result.
\begin{Thm} \label{thm:1.1}
 There exists $C>0$, such that for every $q \geq 1$, $p \in [2,\infty]$, $d \in \mathbb{N}$, and  $f \in \ell^p(\Z ^d)$ we have \\
\[ \Big\| \sup_{t \in \mathbb{D}, t \leq d^{1/q}} |\mathcal{M}_t^{B^q} f| \Big\|_{\ell^p(\mathbb{Z}^d)} \leq C \| f \|_{\ell^p(\mathbb{Z}^d)}. \]

\end{Thm}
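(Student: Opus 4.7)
First I would handle the endpoint $p=\infty$ trivially: each $\mathcal{M}_t^{B^q}$ is an average of translates, so $\|\mathcal{M}_t^{B^q} f\|_\infty \leq \|f\|_\infty$, and therefore the supremum over $t$ is also bounded by $\|f\|_\infty$. By the standard interpolation principle for sublinear positivity-preserving operators (Riesz--Thorin / Marcinkiewicz), the full statement reduces to obtaining a dimension-free bound at the endpoint $p=2$.

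For $p=2$, the key structural observation is that if $y \in (t \cdot B^q) \cap \Z^d$ with $t \leq d^{1/q}$, then $|\supp(y)| \leq t^q \leq d$: indeed $\sum_i |y_i|^q \leq t^q$ and each nonzero integer contributes at least $1$ to the sum. Decomposing the ball according to support size gives
\[
(t\cdot B^q)\cap \Z^d \ =\ \bigsqcup_{k=0}^{\lfloor t^q\rfloor}\, \bigsqcup_{\substack{S\subseteq\{1,\dots,d\}\\|S|=k}} B_{S}^{(t,q)},
\]
where $B_{S}^{(t,q)}$ is the set of vectors in $\Z^d$ supported exactly on $S$ and lying in $t\cdot B^q$. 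Passing to the Fourier side via Plancherel, the multiplier $m_t^q(\xi)$ becomes a symmetric combination, averaged over $k$-subsets $S$ of coordinates, of a product of one-dimensional Fourier contributions, one per coordinate in $S$. This is the same averaged-tensor-product structure that underlies the $\ell^\infty$-case treatment in \cite{cubes}, and the hope is that a similar comparison with a genuine product (tensor) operator on $\ell^2(\Z^d)$, whose maximal function tensorizes from the one-dimensional case, yields the desired dimension-free bound for each fixed scale.

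The main obstacle is controlling the supremum over dyadic scales without absorbing the $O(\log d / q)$ factor inherent in a naive union or square-function bound; if this could be done by just summing over scales the theorem would already follow from the classical single-scale estimate $\|\mathcal{M}_t^{B^q}\|_{\ell^2\to\ell^2}\leq 1$. To achieve a truly dimension-free estimate I would pursue a telescoping decomposition
\[
\sup_{\substack{t\in\D\\ t\leq d^{1/q}}} |\mathcal{M}_t^{B^q} f| \ \leq\ |\mathcal{M}_1^{B^q} f| + \sum_{n\geq 0}\,|\mathcal{M}_{2^{n+1}}^{B^q} f - \mathcal{M}_{2^n}^{B^q} f|,
\]
together with an estimate $\sup_{\xi\in\T^d}|m_{2^{n+1}}^q(\xi) - m_{2^n}^q(\xi)|\leq \varepsilon_n$ with $\sum_n \varepsilon_n <\infty$ uniformly in $d$ and $q$. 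Extracting this summable cancellation from the sparsity and $S_d$-symmetry of the balls described above — or, alternatively, proving a Littlewood--Paley-type square function estimate adapted to this structure — is the technical heart of the proof, and I expect it to be the main point of difficulty.
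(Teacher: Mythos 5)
Your reduction to $p=2$ (trivial $\ell^\infty$ bound plus interpolation for the sublinear maximal operator) is exactly the paper's reduction of Theorem \ref{thm:1.1} to Theorem \ref{thm:1.4}, and your observation about support size and $\Sym(d)$-symmetry is in the right spirit. But the actual content of the theorem --- the dimension-free $\ell^2$ bound --- is left unproved, and the specific route you propose for it cannot work. A telescoping bound $\sup_{\xi}|m_{2^{n+1}}^q(\xi)-m_{2^n}^q(\xi)|\le \varepsilon_n$ with $\sum_n\varepsilon_n<\infty$ uniformly in $d$ is false: the symbol $m_t^q$ behaves (away from the parity point discussed below) like $e^{-c\,\kappa_q(d,t)^q\|\xi\|^2}$, so at frequencies with $\kappa_q(d,2^n)^q\|\xi\|^2\approx 1$ the consecutive difference is of size comparable to $e^{-c}-e^{-c2^q}$, a constant independent of $n$ and $d$. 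Hence $\sum_n\varepsilon_n\gtrsim \log d/q$, which is precisely the loss you set out to avoid; no sup-norm ($\ell^1$-in-scales) estimate on consecutive differences can be dimension-free. The paper instead compares $m_t^q$ with smooth semigroup symbols whose maximal function is bounded by Stein's theorem for symmetric diffusion semigroups (Theorem \ref{thm:4.2}), and controls the error by a square function: the bound of Proposition \ref{prop:4.3}, namely $\min\big(\kappa_q(d,t)^q\|\xi\|^2,(\kappa_q(d,t)^q\|\xi\|^2)^{-1}\big)+\kappa_q(d,t)^{q/50}+t^{-q}$, is square-summable over dyadic $t\le e^{-12/q}d^{1/q}$ \emph{pointwise in $\xi$}, which is an $\ell^2$-in-scales estimate and is the correct substitute for your summable sup-norm cancellation.

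A second missing idea is the parity obstruction. For $t\le d^{1/q}$ almost all points of $B_t^q\cap\Z^d$ have coordinates in $\{-1,0,1\}$ with nearly $t^q$ entries equal to $\pm1$ (Lemmas \ref{lem:2.3}--\ref{sum_z}), so $m_t^q$ does \emph{not} decay near $\xi\equiv(1/2,\dots,1/2)$; it is close there to the signed density $\frac{1}{|B_t^q\cap\Z^d|}\sum_x(-1)^{\sum_i x_i}$. Any comparison with a single symbol decaying in $\|\xi\|$ alone (your tensor-product model, or a single Gaussian-type symbol) fails on that frequency region. The paper must therefore split $f=f_1+f_2$ according to whether $\|\xi\|\le\|\xi+1/2\|$ and use two approximants $\lambda_t^{1,q},\lambda_t^{2,q}$. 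Finally, the quantitative multiplier estimates themselves (Propositions \ref{prop:3.4} and \ref{prop:3.5}) rest on the Krawtchouk polynomial bounds for the simplified symbol $\beta_n^J$ and on the combinatorial lemmas of Section 2; nothing in your proposal supplies an argument at this level, so the ``technical heart'' you flag is genuinely absent rather than merely deferred.
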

The result above for $q=1$ gives a bound for supremum over $t \in \D, t \leq d$, which generalizes the main theorem of \cite{ja}. Moreover Theorem \ref{thm:1.1} for $q \ge 2$ combined with \cite[Theorems 2 and 3]{Bq}
leads to the following corollary.
\begin{Cor}
    \label{cor:1.2}
For every $q \geq 2$ there exists $C(q)>0$, such that for any  $p \in [2,\infty]$, $d \in \mathbb{N}$ and $f \in \ell^p(\Z ^d)$ we have \\
\[ 
\Big\| \sup_{t \in \mathbb{D}} |\mathcal{M}_t^{B^q} f| \Big\|_{\ell^p(\mathbb{Z}^d)} \leq C(q) \| f \|_{\ell^p(\mathbb{Z}^d)}. 
\]
\end{Cor}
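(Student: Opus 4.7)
I would proceed as follows. Because each $\mathcal M_t^{B^q}$ is a probabilistic average, $\|\mathcal M_t^{B^q}f\|_{\ell^\infty(\Z^d)} \le \|f\|_{\ell^\infty(\Z^d)}$, so the bound on the supremum is immediate with constant $1$ when $p=\infty$. By Marcinkiewicz interpolation between this trivial endpoint and the bound we wish to establish for $p=2$, it suffices to prove the theorem for $p=2$.

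The key structural input is a sparsity property. Since each nonzero integer coordinate contributes at least $1$ to $\|y\|_q^q$, every $y \in tB^q\cap\Z^d$ satisfies $|\supp(y)|\le t^q$, and hence $|\supp(y)|\le d$ whenever $t\le d^{1/q}$. This leads to the decomposition
\[
tB^q\cap\Z^d=\bigsqcup_{k=0}^{\lfloor t^q\rfloor}\;\bigsqcup_{A\in\binom{[d]}{k}}\bigl\{y\in\Z^d:\supp(y)=A,\ \|y\|_q\le t\bigr\}.
\]
Using the $\Sym(d)\ltimes\{\pm 1\}^d$-invariance of the ambient ball and of the counting measure, $\mathcal M_t^{B^q}$ can then be written as a convex combination of $\Sym(d)$-symmetrised $k$-dimensional $B^q$-averaging operators evaluated at scale $t\ge k^{1/q}$ (indeed, the smallest $\ell^q$-norm of a vector in $(\Z\setminus\{0\})^k$ is $k^{1/q}$). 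This is precisely the regime in which the large-scale dimension-free bound of \cite{Bq} applies for $q\ge 2$; for $1\le q<2$ the low-$k$ terms dominate and can be treated by an elementary extension of the argument used for $q=1$ in \cite{ja}.

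The remaining step is to pass from the bound for a fixed scale to the maximal estimate over $t\in\D\cap[1,d^{1/q}]$, a set of cardinality $N=O((\log d)/q)$. A direct Rademacher--Menshov or square-function argument is too lossy, introducing a $\sqrt{\log d}$ factor that must be avoided. I therefore plan to work Fourier-analytically on the multiplier $\widehat{\mu_t}(\xi)$ of $\mathcal M_t^{B^q}$, using the support decomposition to expand it as a sum over $k$, and to control the differences $\widehat{\mu_{2^{n+1}}}-\widehat{\mu_{2^n}}$ via a short-jump/long-jump decomposition in frequency: on $\xi\in\T^d$ suitably separated from the singular set of $\widehat{\mu_t}$ one obtains geometric cancellation across scales, while on the remaining ``bad'' set a direct counting estimate together with the cube maximal bound of \cite{cubes} handles the contribution.

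The main obstacle I anticipate is obtaining the estimate on $\widehat{\mu_{2^{n+1}}}-\widehat{\mu_{2^n}}$ uniformly in $q\ge 1$, $d$, and $\xi\in\T^d$. The sparsity constraint $t\le d^{1/q}$ is essential here: it confines the kernel of $\mathcal M_t^{B^q}$ to lattice points of small support, which is exactly what makes the Fourier cancellation across dyadic scales amenable to dimension-free control, in sharp contrast with the large-scale regime treated in \cite{Bq}. Secondary care will be needed at the endpoint $t=1$, where $tB^q\cap\Z^d=\{0,\pm e_1,\dots,\pm e_d\}$ for every $q$ and the estimate is trivial, and at $t\approx d^{1/q}$, where a clean matching with the large-scale analysis of \cite{Bq} is required in order to derive Corollary~\ref{cor:1.2}.
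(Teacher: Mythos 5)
Your reduction to $p=2$ via the trivial $\ell^\infty$ bound plus interpolation, and your reliance on \cite{Bq} for the scales $t \ge d^{1/q}$, match the paper: Corollary \ref{cor:1.2} is obtained there simply by splitting the dyadic supremum at $t=d^{1/q}$ and combining Theorem \ref{thm:1.1} with \cite[Theorems 2 and 3]{Bq}. The gap is in your treatment of the small scales $t \le d^{1/q}$, which is the actual content of the paper (Theorem \ref{thm:1.4}). Your central device --- decomposing $tB^q\cap\Z^d$ by support and writing $\mathcal{M}_t^{B^q}$ as a convex combination of symmetrised $k$-dimensional averages to which the large-scale result of \cite{Bq} applies --- does not yield a maximal estimate: the coefficients of the convex combination depend on $t$, the number of pieces is of order $t^q$ (up to $d$), and after taking $\sup_t$ one can only dominate by a supremum over $k$ of individual maximal functions, whose $\ell^2$ norms must then be combined with a loss growing with the number of pieces, reintroducing the dimension. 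Moreover the pieces are averages over points whose $k$ designated coordinates are all nonzero, not the $k$-dimensional $\ell^q$-ball averages that \cite{Bq} bounds, and repairing this by inclusion--exclusion destroys the positivity on which the convex-combination reduction rests.

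The fallback Fourier-analytic plan is too vague to close this gap: ``geometric cancellation across scales'' and a ``short/long jump'' decomposition are not identified mechanisms, and you dismiss the square-function argument as inherently lossy, whereas it is exactly what works once one first subtracts an approximating multiplier whose maximal function is already known to be bounded. That is the paper's key idea: compare $m_t^q$ with the semigroup symbols $\lambda_t^{1,q}(\xi)=e^{-\kappa_q(d,t)^q\|\xi\|^2}$ and $\lambda_t^{2,q}$ (the latter capturing the parity oscillation $(-1)^{\sum_i x_i}$ near $\xi=1/2$, which your sketch ignores entirely), control $\sup_t|\mathcal{F}^{-1}(\lambda_t^{1,q}\widehat f)|$ by Stein's semigroup maximal theorem, and show that the differences $|m_t^q-\lambda_t^{i,q}|$ are summable over $t\in\D$, $t\le e^{-12/q}d^{1/q}$, using the Krawtchouk-polynomial bounds for the symbols $\beta_n^J$ (Proposition \ref{prop:3.4}) together with the combinatorial lattice-point lemmas (Lemmas \ref{lem:2.3}--\ref{sum_z}). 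None of these ingredients appear in your proposal, so as written it does not prove the small-scale estimate and hence does not prove the corollary.
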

Methods of \cite{Bq} can even give a $C(q)$ uniform with respect to $q \ge 2$, however we don't claim any results in this regard and will not comment further on this matter. \par 
Our proof of Theorem \ref{thm:1.1} uses methods of \cite[Section 3]{balls} in a more streamlined version, adapting them if necessary. The fact that these methods still lead to new conclusions was apparently missed by the authors of \cite{Bq}. Case $q \in [1,2)$ leads to some new difficulties, which require ad hoc lemmas such as Lemma \ref{lem:2.4} or Lemma \ref{sum_z}.  
\subsection{Notation} \label{sec1.1}
 $\mathbb{N}=\lbrace 1,2,... \rbrace$ will denote the set of positive integers and $\mathbb{N}_0=\mathbb{N} \cup \lbrace 0 \rbrace$ will denote the set of non-negative integers. $\mathbb{D}=\lbrace 2^n : n \in \mathbb{N}_0 \rbrace$ is the set of dyadic integers. For $N \in \N$ we define
$[N]= \lbrace 1,2,...,N \rbrace $. 
 For $q \ge 1, t \ge 0$ let
\[
B_t^q= \{ x \in \mathbb{R}^d : \sum_{i=1}^d |x_i|^q \leq t^q \},
\]
notice that the set $B_t^q$ depends also on the parameter $d$.

 We define $e(x)=e^{2\pi i x}$ for any $x \in \mathbb{R}^d$. 
 We use the standard scalar product on $\mathbb{R}^d$
\[
 x \cdot y= \sum_{k=1}^d x_ky_k,
\]
where $x,y \in \mathbb{R}^d.$ 
 For $p \in [1,\infty)$, $f: \mathbb{Z}^d \to \mathbb{C}$ and $x \in \mathbb{R}^d$ let
\[
\|x \|_{p}= \Big( \sum_{i=1}^d |x_i|^p \Big)^{1/p}, \quad \| f\|_{\ell^p(\Z^d)}= \Big( \sum_{x \in \Z^d} |f(x)|^p \Big)^{1/p}
\]
and
\[
\ell^p(\Z^d)= \{ g: \Z^d \to \mathbb{C} : \| g\|_{\ell^p(\Z^d)}< \infty \}.
\]
If $p= \infty$ then for $x \in \mathbb{R}^d$ and $f: \Z^d \to \mathbb{C}$ we define
\[
\|x||_{\infty}=\sup_{i \in [d]} |x_i|, \ \ \ \ \|f\|_{\ell^{\infty}(\Z^d)}= \sup_{x \in \Z^d} |f(x)|, \quad \ell^\infty(\Z^d)= \{ g: \Z^d \to \mathbb{C} : \| g\|_{\ell^\infty(\Z^d)}< \infty \}.
\]
 For $f \in \ell^2(\mathbb{Z}^d)$ and $g: \Z^d \to \C$ with finite support we define $f \ast g \in \ell^2(\mathbb{Z}^d)$ by the series
\[f \ast g(x)= \sum_{y \in \mathbb{Z}^d } f(y)g(x-y)=\sum_{y \in \mathbb{Z}^d } f(x-y)g(y).\]
If $f \in \ell^1(\Z ^d)$ we introduce the discrete Fourier transform by the formula
\[
 \widehat{f}(\xi)= \sum_{x \in \Z ^d} f(x) e( x \cdot \xi ), \ \text{ for } \ \xi \in \T^d. 
\]
One can uniquely extend the discrete Fourier transform to $f \in \ell^2(\Z ^d)$ so that $\widehat{f} \in L^2(\T^d)$ and we have the following Parseval identity:
\[
  \|\widehat{f}\|_{L^2(\T^d)}=\| f \|_{\ell^2(\Z^d)}.
\]
Moreover, for any $f \in \ell^2(\Z ^d)$ and $g: \Z^d \to \C$ with finite support , the following holds
\[
 \widehat{f \ast g}(\xi)= \widehat{f}(\xi) \widehat{g}(\xi).
\]
$\mathcal{F}^{-1}$ will denote the inverse of the discrete Fourier transform, that is
\[
\mathcal{F}^{-1}(G)(x)= \int_{\T^d} G(\xi) e(-x \cdot \xi) \ d \xi,
\]
where $G \in L^2(\T ^d)$.
 We let $m^q_t$ be the multiplier symbol \[m^q_t(\xi)=\frac{1}{|B^q_t \cap \mathbb{Z}^d|}  \sum_{x \in B^q_t \cap \mathbb{Z}^d} e(x \cdot \xi). \]
 $\mathbb{T}^d$ will denote the $d$-dimensional torus, which will be identified with the set $[-\frac{1}{2}, \frac{1}{2})^d.$ For $\xi \in \T^d$ we define 
 \[
 \|\xi \|= \Big( \sum_{j=1}^d \sin^2(\pi \xi_j) \Big)^{1/2},
 \]
 note that then $\| \xi+1/2 \|= \Big( \sum_{j=1}^d \cos^2(\pi \xi_j) \Big)^{1/2}.$
 $\Sym(d)$ will denote the permutation group of $\lbrace 1,2,...,d \rbrace$, it naturally acts on $\R^d$ by $\sigma \cdot x=(x_{\sigma(1)},...,x_{\sigma(d)})$.
 Let $Q=[-\frac{1}{2}, \frac{1}{2}]^d$. For convenience we define $\kappa_q(d,N)=\frac{N}{d^{1/q}}$. 
 We will use the convention that $A \lesssim B$ to say that there exists an absolute constant $C>0$ such that $A \leq C B$.
 For $A \subseteq \mathbb{R}^d$ by $|A|$ we will denote Lebesgue measure of $A$ or the number of elements of $A$, this should be clear from the context.
 For $x \in \R^d$ we define 
\[
\supp(x)= \{i \in [d]: x_i \neq 0\}. 
\]
 
\begin{Defn}  \label{def:1.3} The discrete Hardy-Littlewood averaging operator over the $\ell^q$ balls is defined for any function $f:\mathbb{Z}^d \to \mathbb{C}$ by the formula
 \[ \mathcal{M}_t^q f(x)=\frac{1}{|B_t^q \cap \mathbb{Z} ^d |} \sum_{ y \in B_t^q \cap \mathbb{Z} ^d } f(x-y). \]

\end{Defn}
Our goal is to prove the following theorem.
\begin{Thm} \label{thm:1.4}
There exists $C>0$, such that for every $q \geq 1$, $d \in \mathbb{N}$, and $f \in \ell^2(\Z ^d)$ we have \\
\[ \Big\| \sup_{t \in \mathbb{D}, t \leq d^{1/q}} |\mathcal{M}_t^{q} f| \Big\|_{\ell^2(\mathbb{Z}^d)} \leq C \| f \|_{\ell^2(\mathbb{Z}^d)}, \]
where $\mathbb{D}=\lbrace 2^n : n \in \mathbb{N}_0 \rbrace$ is the set of dyadic integers.
\end{Thm}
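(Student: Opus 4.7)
My plan is to pass to the Fourier side on $\T^d$ and reduce Theorem~\ref{thm:1.4} via Plancherel to an $L^2$-bound for the dyadic maximal multiplier operator with symbols $m_{2^n}^q$, for integers $n \ge 0$ with $2^n \le d^{1/q}$. The basic geometric input is that every $x \in B_t^q \cap \Z^d$ satisfies $|\supp(x)| \le t^q$, since each coordinate in the support contributes at least $1$ to $\|x\|_q^q$; in particular, for $t \le d^{1/q}$ only support sizes $k \in \{0,1,\ldots,\lfloor t^q\rfloor\}$ appear, and the number of relevant dyadic scales is $O(\log d / q)$.

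I would then decompose $m_t^q$ according to the support size of $x$. Grouping by $S = \supp(x)$ and using the invariance of $B_t^q$ under coordinate sign-flips and under $\Sym(d)$, one may write
\[
|B_t^q \cap \Z^d|\, m_t^q(\xi) \;=\; \sum_{k=0}^{\lfloor t^q \rfloor}\; \sum_{\substack{S \subseteq [d] \\ |S|=k}} P_{k,t}(\xi_S),
\]
where $P_{k,t}(\eta) = \sum_{y \in (\Z\setminus\{0\})^k,\, \|y\|_q \le t} e(y\cdot\eta)$ and $\xi_S$ is the restriction of $\xi$ to coordinates in $S$. By sign symmetry $P_{k,t}$ depends only on $\cos(2\pi\eta_1),\ldots,\cos(2\pi\eta_k)$, and when expanded, the outer sum $\sum_{|S|=k}$ exhibits the symmetric-function structure that, together with standard identities for elementary symmetric polynomials, renders it tractable uniformly in $d$.

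The dimension-free bound would then be assembled in two stages. I would introduce an auxiliary structured multiplier $\widetilde m_t^q$ capturing the dominant contribution, for instance one built from the $\pm 1$-skeleton of $B_t^q$ and therefore of the form of a partial sum of elementary symmetric polynomials in $2\cos(2\pi\xi_i)$, and bound the dyadic maximal operator associated with $\widetilde m_{2^n}^q$ directly using classical symmetric-function estimates and the fact that its arguments lie in $[-2,2]$. The remainder $m_t^q - \widetilde m_t^q$ I would estimate via a Plancherel-based square-function bound
\[
\Big\|\sup_{n} \bigl|\mathcal{F}^{-1}\bigl((m_{2^n}^q - \widetilde m_{2^n}^q)\widehat f\bigr)\bigr|\Big\|_{\ell^2(\Z^d)}^2 \;\le\; \sum_{n} \int_{\T^d} |m_{2^n}^q(\xi) - \widetilde m_{2^n}^q(\xi)|^2\, |\widehat f(\xi)|^2\, d\xi,
\]
reducing the problem to a pointwise $\ell^2(n)$-estimate on the symbol differences that is uniform in $\xi$ and $d$. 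This is the streamlining referred to in the introduction, and it sidesteps the logarithmic Rademacher--Menshov loss that a naive argument would incur over the $O(\log d)$ relevant scales.

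The main obstacle I expect is the sharp pointwise comparison between $m_t^q$ and the structured approximation $\widetilde m_t^q$, together with the matching asymptotic for $|B_t^q \cap \Z^d|$, uniformly in the dimension and across all admissible dyadic scales. For $q \ge 2$ the $\ell^q$-ball is wide enough that the argument should closely mirror the $B^2$-analysis of \cite{balls}. For $q \in [1,2)$ the ball is sharply concentrated along the coordinate axes, the $\pm 1$-skeleton is no longer the most convenient object of comparison, and dedicated counting and cancellation estimates on the torus are required to control both the lattice-point counts and the corresponding trigonometric sums; these estimates are precisely the role played by Lemma~\ref{lem:2.4} and Lemma~\ref{sum_z} advertised in the introduction.
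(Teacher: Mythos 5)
Your framework (Plancherel, square-function over $O(\log d/q)$ dyadic scales, $\pm 1$-skeleton as dominant structure, ad hoc counting lemmas for $q<2$) matches the paper's architecture in outline, but the proposal contains a genuine gap at the step that actually produces the maximal bound. You propose to take $\widetilde m_t^q$ to be the $\pm 1$-skeleton multiplier (a mixture of normalized elementary symmetric polynomials in $2\cos(2\pi\xi_i)$, i.e.\ Krawtchouk-type symbols) and to bound the associated dyadic maximal operator \emph{directly} via ``classical symmetric-function estimates.'' No such dimension-free maximal theorem for these skeleton symbols on $\T^d$ is available; the Harrow--Kolla--Schulman result the paper cites is used only to obtain the pointwise Krawtchouk bound of Theorem~\ref{thm:3.3}~\eqref{thm:3.3.5}, not a maximal inequality. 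The paper's structured multiplier is instead the heat-semigroup symbol $\lambda_t^{1,q}(\xi)=e^{-\kappa_q(d,t)^q\|\xi\|^2}$ (and its shift $\lambda_t^{2,q}$), whose maximal operator is controlled by Stein's theory of symmetric diffusion semigroups (Theorem~\ref{thm:4.2}); the skeleton multipliers $\beta_n^J$ appear only as an intermediate device, via Proposition~\ref{prop:3.4}, to show that $m_N^q$ is close to this Gaussian. Without this semigroup input, the ``dominant contribution'' in your plan has no boundedness proof, so the scheme does not close.

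A second omission is the dichotomy between the regions $\|\xi\|\le\|\xi+1/2\|$ and $\|\xi\|>\|\xi+1/2\|$. The symbol $m_N^q(\xi)$ does not decay near $\xi=(1/2,\dots,1/2)$; there it is close to $\frac{1}{|B_N^q\cap\Z^d|}\sum_{x\in B_N^q\cap\Z^d}(-1)^{\sum_i x_i}$, which need not be small. Your single approximant $\widetilde m_t^q$ cannot simultaneously match the symbol near $\xi=0$ and near $\xi=1/2$; the paper handles this by introducing two Gaussian-type multipliers $\lambda_t^{1,q},\lambda_t^{2,q}$ (Proposition~\ref{prop:4.3}) and splitting $f=f_1+f_2$ by Fourier support according to which of $\|\xi\|$, $\|\xi+1/2\|$ is smaller. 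Both missing ingredients are essential; with them, your square-function scheme would indeed reduce to the pointwise $\ell^2$-in-scales estimates of Proposition~\ref{prop:4.3}, which is exactly the paper's route.
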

Using an interpolation argument one can show that Theorem $\ref{thm:1.4}$ implies Theorem $\ref{thm:1.1}$.

 \section{Auxiliary lemmas} In this section we give some preliminary results regarding behavior of points in $B_N^q \cap \Z^d$, which will turn out to be useful in next sections. \par
The argument in the proof of Lemma \ref{lem:2.1} below is very greedy, in the case of $q=2$ it gives worse bound by a factor $2^{d/2}$ compared to \cite[Lemma 5.1]{contANDdiscr}, however it is completely irrelevant for us.
\begin{Lem} \label{lem:2.1}
    Let $d \in \N$, $N>0$, $q \geq 1$. Define $N_1=N+\frac{d^{1/q}}{2}$ then we have
\[ (2 \lfloor \kappa_q(d,N) \rfloor +1)^d \le | B_N^q \cap \mathbb{Z}^d | \leq |B_{N_1}^q|. \]
\end{Lem}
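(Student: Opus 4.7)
The plan is to prove the two inequalities separately, with the lower bound handled by an explicit inclusion of a discrete cube into $B_N^q$, and the upper bound handled by the standard ``attach a unit cube to each lattice point'' volume-packing trick combined with the $\ell^q$ triangle inequality.

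For the lower bound, I would set $m = \lfloor \kappa_q(d,N) \rfloor = \lfloor N/d^{1/q} \rfloor$ and show that the discrete cube $\{-m,-m+1,\dots,m\}^d$, which has cardinality $(2m+1)^d$, is contained in $B_N^q \cap \Z^d$. Indeed, for any $x$ in this cube,
\[
\sum_{i=1}^d |x_i|^q \;\leq\; d\cdot m^q \;\leq\; d\cdot \bigl(N/d^{1/q}\bigr)^q \;=\; N^q,
\]
so $x \in B_N^q$. This is essentially the ``greediest'' embedding of a cube, which is why the paper notes the bound is off by a factor $2^{d/2}$ in the case $q=2$; for the applications of this lemma the exponential loss is harmless.

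For the upper bound, I would associate to each lattice point $x \in B_N^q \cap \Z^d$ the unit cube $x + Q$, where $Q = [-\tfrac12,\tfrac12]^d$. These cubes are pairwise disjoint and each has Lebesgue measure $1$, so their union has measure exactly $|B_N^q \cap \Z^d|$. It then suffices to show the containment $\bigcup_{x} (x+Q) \subseteq B_{N_1}^q$. Given $y = x + t$ with $t \in Q$, the $\ell^q$ triangle inequality yields
\[
\|y\|_q \;\leq\; \|x\|_q + \|t\|_q \;\leq\; N + \Bigl(\sum_{i=1}^d (1/2)^q\Bigr)^{1/q} \;=\; N + \frac{d^{1/q}}{2} \;=\; N_1,
\]
giving $y \in B_{N_1}^q$. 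Taking Lebesgue measures finishes the upper bound.

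I do not foresee any real obstacle here: both halves are one-line computations once the correct object (the embedded cube for the lower bound, the shifted unit cubes for the upper bound) is produced. The only small point to be careful with is the use of the $\ell^q$ triangle inequality, which requires $q \geq 1$ — this is exactly the hypothesis of the lemma, so nothing is lost.
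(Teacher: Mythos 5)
Your proof is correct and takes essentially the same route as the paper: the lower bound comes from the inclusion of the lattice cube $\{-\lfloor\kappa_q(d,N)\rfloor,\dots,\lfloor\kappa_q(d,N)\rfloor\}^d$ in $B_N^q$, and the upper bound from attaching the unit cube $x+Q$ to each lattice point and using the $\ell^q$ triangle inequality to place these cubes inside $B_{N_1}^q$. (The only cosmetic point is that the closed cubes $x+Q$ overlap on faces of measure zero rather than being literally disjoint, which is harmless for the measure count.)
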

\begin{proof} Lower bound follows from the inclusion $[-\kappa_q(d,N),\kappa_q(d,N)] \subseteq B_N^q$.
\par In terms of the upper bound, notice that for every $x \in B_N^q$ and $z \in Q$ by the triangle inequality we have
\[\| x +z \|_{q} \leq \| x \|_{q} + \|z \|_{q} \leq N+\frac{d^{1/q}}{2}=N_1. \]
From this we obtain
\begin{align*}
| B_N^q \cap \mathbb{Z}^d |&= \hspace{-0.3cm}\sum_{x \in B_N^q \cap \mathbb{Z}^d} \hspace{-0.3cm}1 \leq 	\hspace{-0.3cm}\sum_{x \in B_N^q \cap \mathbb{Z}^d} \int_{Q} \mathds{1}_{ \lbrace z \in Q : \| x+z \|_{q} \leq N_1 \rbrace}(y) dy= \hspace{-0.3cm} \sum_{x \in B_N^q \cap \mathbb{Z}^d} \int_{x+Q} \hspace{-0.3cm} \mathds{1}_{ B_{N_1}^q}(y) dy \leq |B_{N_1}^q|. 
\end{align*}

\end{proof}
\begin{Cor}
    \label{cor:2.2}

    For all $q \geq 1$, $d \in \N$, $N>0$ we have the following bound
\[  | B_N^q \cap \mathbb{Z}^d | \leq 2\Big(\kappa_q(d,N)+\frac{1}{2}\Big)^d \cdot 8^d . \]
\end{Cor}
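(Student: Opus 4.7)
My plan is to apply Lemma \ref{lem:2.1} and reduce the claim to an estimate on the volume of the unit $\ell^q$-ball in $\mathbb{R}^d$. Writing $N_1 := N + d^{1/q}/2 = d^{1/q}(\kappa_q(d,N) + \tfrac{1}{2})$, Lemma \ref{lem:2.1} yields
\[
|B_N^q \cap \Z^d| \leq |B_{N_1}^q| = N_1^d \,|B_1^q| = d^{d/q}\big(\kappa_q(d,N)+\tfrac{1}{2}\big)^d |B_1^q|.
\]
The corollary therefore reduces to the dimension-wise inequality $d^{d/q}|B_1^q| \leq 2\cdot 8^d$.

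Next, I would insert the classical formula $|B_1^q| = (2\Gamma(1+1/q))^d/\Gamma(1+d/q)$. Since $q \geq 1$ forces $1/q \in (0,1]$ and $\Gamma \leq 1$ on $[1,2]$, one has $\Gamma(1+1/q) \leq 1$; writing $m := d/q$, it then suffices to prove $d^m/\Gamma(1+m) \leq 2 \cdot 4^d$. I would split into two regimes. If $m \geq 1$ (equivalently $q \leq d$), Stirling's lower bound $\Gamma(1+m) \geq (m/e)^m$ gives
\[
\frac{d^m}{\Gamma(1+m)} \leq \Big(\frac{ed}{m}\Big)^{\!m} = (eq)^{d/q} \leq e^d \leq 4^d,
\]
where the penultimate step uses that $q \mapsto (eq)^{1/q}$ is maximized on $[1,\infty)$ at $q = 1$ with value $e$ (as $\tfrac{d}{dq}\tfrac{\ln(eq)}{q} = -\tfrac{\ln q}{q^2}\leq 0$). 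If instead $m < 1$ (equivalently $q > d$), then $d^{1/q}\leq d^{1/d}\leq e^{1/e}$ yields $d^m \leq e^{d/e}$, while $\Gamma(1+m) \geq \min_{x \in [0,1]}\Gamma(1+x) \geq 1/2$, so
\[
\frac{d^m}{\Gamma(1+m)} \leq 2 e^{d/e} \leq 2 \cdot 4^d
\]
because $e^{1/e} < 4$. Combining both cases yields the claim.

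The main obstacle, modest as it is, lies in reconciling the two regimes $q \leq d$ and $q > d$: Stirling's asymptotic is sharp exactly when $m = d/q \geq 1$, but becomes lossy for small $m$, which forces a complementary bound based on the elementary inequality $d^{1/d}\leq e^{1/e}$ in the case $q > d$. The factor of $2$ in the corollary is exactly what is needed to absorb the loss in this small-$m$ estimate.
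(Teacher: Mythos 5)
Your proof is correct and takes essentially the same route as the paper: Lemma \ref{lem:2.1}, the exact volume formula $|B_1^q| = (2\Gamma(1+1/q))^d/\Gamma(1+d/q)$ with $\Gamma(1+1/q)\le 1$, and a Stirling-type lower bound for the Gamma function, together with the fact $x^{1/x}\le e^{1/e}$. The only cosmetic difference is your case split $q\le d$ versus $q>d$; the paper avoids it by applying the bound $\Gamma(x)\ge \sqrt{2\pi/x}\,(x/e)^x$ directly at $x=1+d/q\ge 1$ and then estimating $d^{d/q}/(1+d/q)^{d/q}\le q^{d/q}\le e^{d/e}$.
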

\begin{proof}
    
Using Lemma \ref{lem:2.1} and the formula for the volume of unit $\ell^q$ ball in $\mathbb{R}^d$ space we obtain
\[| B_N^q \cap \mathbb{Z}^d | \leq |B_{N_1}^q| = \Big( N+\frac{d^{1/q}}{2}\Big)^d \cdot \frac{2^d \Gamma(1+\frac{1}{q})^d}{\Gamma(1+\frac{d}{q})} . \]
Using the following bound for the gamma function 
\[
\sqrt{\frac{2 \pi }{x}} \Big( \frac{x}{e} \Big)^x \leq \Gamma(x),
\]
which holds for all $x>0$ (see for instance \cite[\href{https://dlmf.nist.gov/5.6.E1}{(5.6.1)}]{DLMF})
and the fact that $\Gamma(y) \leq 1 $ for all $y \in [1,2]$, we get
\begin{align*}
&| B_N^q\cap \mathbb{Z}^d | \leq \Big( N+\frac{d^{1/q}}{2}\Big)^d \cdot \frac{2^d \Gamma(1+\frac{1}{q})^d}{\Gamma(1+\frac{d}{q})} \leq \Big(\kappa_q(d,N)+\frac{1}{2} \Big)^d d^{d/q} \cdot \frac{2^d}{\Gamma(1+\frac{d}{q})}
\\
&\leq \Big(\kappa_q(d,N)+\frac{1}{2} \Big)^d d^{d/q} \cdot \frac{2^d e^{1+d/q} \sqrt{(1+d/q)}}{(1+d/q)^{1+d/q}} \frac{1}{\sqrt{2 \pi}} \le \Big(\kappa_q(d,N)+\frac{1}{2} \Big)^d d^{d/q} \cdot \frac{(2e)^d  }{(1+d/q)^{d/q}} \frac{e}{\sqrt{2 \pi}} 
\\
&\le 2 \Big(\kappa_q(d,N)+\frac{1}{2} \Big)^d  \cdot (2e)^d  q^{ \frac{d}{q}} \le 2\Big(\kappa_q(d,N)+\frac{1}{2} \Big)^{d}  \cdot (2e \cdot e^{1/e})^{d} \le 2\Big(\kappa_q(d,N)+\frac{1}{2} \Big)^{d}  \cdot 8^d.
\end{align*}
\end{proof}
The next Lemma is an adapted version of \cite[Lemma 3.2]{balls}, proof goes along the same lines.
\begin{Lem} \label{lem:2.3}
For all $q \geq 1$, $d,N \in \mathbb{N}$, if $\kappa_q(d,N) \leq  409^{-1/q}$ and $N^q \geq k \geq 409\kappa_q(d,N)^q N^q$, then we have
\begin{equation} \label{lem2.3eq1}
 | \lbrace x \in B_N^q \cap \mathbb{Z}^d : |\lbrace i \in [d]:x_i= \pm 1 \rbrace | \leq N^q-k \rbrace| \leq 2^{2-k} |B_N^q \cap \mathbb{Z}^d|.  
\end{equation} 
Moreover for all $q \geq 1$ and $d,N \in \N$ satisfying $\kappa_q(d,N) \le N^{-11}$ we have
\begin{equation} \label{lem2.3eq2}
| \lbrace x \in B_N^q \cap \mathbb{Z}^d : |\lbrace i \in [d]:x_i= \pm 1 \rbrace | \leq N^q-2 \rbrace| \leq  \frac{1}{N^{q}} |B_N^q \cap \mathbb{Z}^d|.  
\end{equation} 
    
\end{Lem}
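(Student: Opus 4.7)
My plan is to adapt the double-counting argument of \cite[Lemma 3.2]{balls} to the general $\ell^q$ setting. Write $A_k = \{x \in B_N^q \cap \Z^d : |\{i : x_i = \pm 1\}| \leq N^q - k\}$ and $T_x = \{i \in [d] : |x_i| \geq 2\}$. The key step is an ``extension'' map: for each $x \in A_k$ satisfying $\|x\|_q^q \leq N^q - k$ and each pair $(J, \epsilon)$ with $J \subseteq [d] \setminus \supp(x)$, $|J| = k$, $\epsilon \in \{-1, 1\}^J$, the point $y = x + \sum_{i \in J} \epsilon_i e_i$ lies in $B_N^q \cap \Z^d$, since $\|y\|_q^q = \|x\|_q^q + k \leq N^q$. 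Each such $y$ has exactly $\binom{|\{i : y_i = \pm 1\}|}{k}$ preimages under this map, obtained by choosing $k$ of the $\pm 1$-coordinates of $y$ and setting them to zero.

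Counting the triples $(x, J, \epsilon)$ in two ways, and using $|\supp(x)| \leq \|x\|_q^q \leq N^q$ together with $|\{i : y_i = \pm 1\}| \leq N^q$, one obtains
\[
|\{x \in A_k : \|x\|_q^q \leq N^q - k\}| \cdot \binom{d - N^q}{k} \cdot 2^k \leq \binom{N^q}{k} \cdot |B_N^q \cap \Z^d|.
\]
Under $\kappa_q(d,N)^q \leq 1/409$ the ratio $\binom{N^q}{k}/\binom{d-N^q}{k} \leq (N^q/(d - N^q))^k \leq (1/408)^k$, so this portion of $A_k$ contributes at most $(1/816)^k |B_N^q \cap \Z^d| \leq 2^{-k}|B_N^q \cap \Z^d|$, with plenty of room to spare.

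For the remaining part $\{x \in A_k : \|x\|_q^q > N^q - k\}$, which is contained in $\{x : T_x \neq \emptyset\}$, I would use a variant of the extension argument: first contract each $x_i$ with $i \in T_x$ to $\mathrm{sign}(x_i)$ (this decreases $\|x\|_q^q$ by $|x_i|^q - 1 \geq 2^q - 1$ and frees up room), then adjoin $\pm 1$s in the complement of the support to build a map into $B_N^q \cap \Z^d$. The bookkeeping for choosing the original values $(|x_i|)_{i \in T_x}$ introduces extra combinatorial factors; the hypothesis $k \geq 409 \kappa_q(d,N)^q N^q$ is precisely what absorbs them, yielding a bound of $3 \cdot 2^{-k}|B_N^q \cap \Z^d|$ on this contribution and finishing the proof of \eqref{lem2.3eq1}.

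The main obstacle is the handling of $\{x \in A_k : \|x\|_q^q > N^q - k\}$. In the $\ell^2$ case of \cite{balls}, $\|x\|_2^2$ is automatically integer-valued, which neatly separates this set from the main one; for general $q \geq 1$ the norms are not integers, so one has to track the large-coordinate contribution explicitly via the contract-then-extend map outlined above, which is where the precise form of the assumption $k \geq 409 \kappa_q(d,N)^q N^q$ becomes essential. Finally, \eqref{lem2.3eq2} is obtained by specialising the above to $k = 2$ and feeding in the strong hypothesis $\kappa_q(d,N) \leq N^{-11}$: the ratio in the main step becomes $(N^q/(d - N^q))^2 / 4 \lesssim \kappa_q(d,N)^{2q} \leq N^{-22q} \leq 1/N^q$, and the large-coordinate part is handled identically but with sharper constants.
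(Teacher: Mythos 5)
Your first step---the extension/double-counting map applied to $\{x \in A_k : \|x\|_q^q \le N^q-k\}$---is sound and is a genuinely different (and arguably cleaner) way to treat that portion than the paper, which instead decomposes by the deficiency $m=N^q-|\{i:x_i=\pm1\}|$ and compares each slice $E_m$ with the count $2^{n}\binom{d}{n}$ of vectors having exactly $n=N^q$ coordinates equal to $\pm 1$; your one-shot inequality even covers points that do have coordinates of size $\ge 2$ but small total $q$-norm. (Two small imprecisions there: ``exactly $\binom{|\{i:y_i=\pm1\}|}{k}$ preimages'' should be ``at most'', which is the direction you need anyway, and one should write $\lfloor N^q\rfloor$ where integrality is used.)

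The genuine gap is the second portion, $\{x\in A_k:\|x\|_q^q> N^q-k\}$, which is where the whole difficulty of the lemma sits and where the hypotheses $k\ge 409\,\kappa_q(d,N)^qN^q$ and $\kappa_q(d,N)\le N^{-11}$ are actually consumed. Your contract-then-extend outline does not yet yield a bound: after contracting each $|x_i|\ge 2$ to $\mathrm{sign}(x_i)$, the number of $\pm1$'s you can adjoin is governed by the freed weight $\sum_{i\in T_x}(|x_i|^q-1)$, not by $k$, so it is not clear from your sketch where the exponential decay $2^{-k}$ for this portion comes from; and the ``extra combinatorial factors'' (choosing which coordinates are large, and which values $\ge 2$ they take subject to the $\ell^q$ constraint --- a lattice point count of a lower-dimensional $\ell^q$ ball) are exactly the quantities one must estimate, not merely assert are ``absorbed'' by the constant $409$. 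In the paper this is the content of \eqref{eq:2.5}--\eqref{eq:2.8}: the value-assignment count is bounded via Corollary \ref{cor:2.2} by $2\cdot 6^m$, the binomial ratios give $\big(n\kappa_q(d,N)^q/m\big)^{m/2}\,20.211^m$, and one checks that $m\ge 409\,n\kappa_q(d,N)^q$ makes this $\le 1$; without an estimate of this kind your claimed bound $3\cdot 2^{-k}|B_N^q\cap\Z^d|$ is unsupported. Likewise, \eqref{lem2.3eq2} is not a specialization of \eqref{lem2.3eq1}: taking $k=2$ there only gives the useless constant $2^{2-2}=1$, so the factor $N^{-q}$ must be re-extracted from the same large-coordinate estimates under $\kappa_q(d,N)\le N^{-11}$ (as the paper does by re-running both cases), and your ``handled identically but with sharper constants'' leaves precisely that step undone.
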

\begin{proof}
\par We will first prove \eqref{lem2.3eq1}. Let $n=N^q$. We have that
\begin{equation} \label{eq:2.3}
 | \lbrace x \in B_N^q \cap \mathbb{Z}^d : |\lbrace i \in [d] :x_i= \pm 1 \rbrace | \leq n-k \rbrace|= \sum_{m=k}^n |E_m|, 
 \end{equation}
where 
\[ E_m= \lbrace x \in B_N^q \cap \mathbb{Z}^d : |\lbrace i \in [d] :x_i= \pm 1 \rbrace | = n-m \rbrace. \]
To prove \eqref{lem2.3eq1} it is sufficient to show that $|E_m| \leq 2^{1-m} |B_N^q \cap \mathbb{Z}^d|$ holds for every $m \in \lbrace k,k+1,...,n \rbrace$. Notice that if $x \in E_m$ then
\[ | \lbrace i \in [d]: |x_i| \geq 2 \rbrace | \leq \Big\lfloor \frac{m}{2^q} \Big\rfloor. \]
In order to bound $|E_m|$ we will consider two cases separately.
\par \textbf{Case 1) $m< 2^q$.} \\
In this situation every $x \in E_m$ consists only of coordinates $-1,0,1$, hence we have
\begin{equation*}
    |E_m| = 2^{n-m} \binom{d}{n-m}.
\end{equation*}
Moreover 
\begin{equation*} 
    \begin{split}
        &\binom{d}{n-m}  \binom{d}{n}^{-1} \hspace{-0.3cm}= \frac{n!(d-n)!}{(n-m)!(d-n+m)!} \le \Big( \frac{n}{d-n} \Big)^m\hspace{-0.3cm} = \Big( \frac{\kappa_q(d,N)^q}{1-\kappa_q(d,N)^q}  \Big)^m \hspace{-0.3cm}\le \Big( 2\kappa_q(d,N)^q\Big)^m,
    \end{split}
\end{equation*} 
above we have used the fact that $\kappa_q(d,N) \le 2^{-1/q}$.
We also have 
\begin{equation} \label{eq:2.4}
 2^n \binom{d}{n} \leq |B_N^q \cap \mathbb{Z}^d |.
\end{equation}
Combining these three facts we obtain
\begin{align*}
    &|E_m| \le 2^{n-m} \binom{d}{n-m}= 2^{-m} \cdot 2^{n} \binom{d}{n} \binom{d}{n-m}  \binom{d}{n}^{-1} \\
    &\le 2^{-m} |B_N^q \cap \Z^d| \Big( 2\kappa_q(d,N)^q\Big)^m \le 2^{-m} |B_N^q \cap \Z^d|.
\end{align*}
%%%%%%%%%
%%%%%%%%%
\par \textbf{Case 2) $m \ge 2^q$.} \\
Notice that we have the following upper bound for $|E_m|$ . 
\begin{equation} \label{eq:2.5}
|E_m| \leq 2^{n-m} \binom{d}{n-m} \binom{d-n+m}{\lfloor \frac{m}{2^q} \rfloor} |B_{m^{1/q}}^{(q,\lfloor m/2^q \rfloor)} \cap \mathbb{Z}^{\lfloor m/2^q \rfloor} |, 
\end{equation}
where
\[
B_{m^{1/q}}^{(q,\lfloor m/2^q \rfloor)}= \lbrace y \in \mathbb{R}^{\lfloor m/2^q \rfloor}: \| y \|_{q} \leq m^{1/q} \rbrace.
\]
Indeed, in $\binom{d}{n-m}$ options we choose coordinates on which $x \in E_m$ will have values $\pm 1$, then we choose each sign in 2 ways, this explains the factor $2^{n-m} \binom{d}{n-m}$. Next we choose $\lfloor \frac{m}{2^q} \rfloor$ coordinates  in which the set $\lbrace i \in [d]: |x_i| \geq 2 \rbrace$ will be contained, for that we have $\binom{d-n+m}{\lfloor \frac{m}{2^q} \rfloor}$ options. Lastly we bound the number of ways of putting numbers on these coordinates such that the condition $\| x \|_{q} \leq N$ holds, this is bounded by $|B_{m^{1/q}}^{(q,\lfloor m/2^q \rfloor)} \cap \mathbb{Z}^{\lfloor m/2^q \rfloor}|$. \\ \\
Using Corollary \ref{cor:2.2} we obtain that
\begin{equation} \label{eq:2.6}
    \begin{split}
        |B_{m^{1/q}}^{(q,\lfloor m/2^q \rfloor)} \cap \mathbb{Z}^{\lfloor m/2^q \rfloor} | &\leq 2 \cdot \Big( \frac{m^{1/q}}{\lfloor m/2^q \rfloor^{1/q}}+\frac{1}{2} \Big)^{\lfloor m/2^q \rfloor} 8^{\lfloor m/2^q \rfloor} \leq 2 \cdot \Big( \Big( \frac{m}{m/2^{q+1}} \Big)^{1/q}+ \frac{1}{2} \Big)^{m/2^q} 8^{\lfloor m/2^q \rfloor}  \\
 &= 2 \cdot\Big( 2^{1+1/q}+ \frac{1}{2} \Big)^{m/2^q} 8^{\lfloor m/2^q \rfloor} \leq  2 \cdot\Big( \frac{9}{2} \Big)^{m/2} 8^{m/2}= 2 \cdot 6^{m},
    \end{split}
\end{equation}
above we have used the fact that $m  \geq 2^{q}$.
We also have that
\begin{equation} \label{eq:2.7}
    \begin{split}
        &\binom{d}{n-m} \binom{d-n+m}{\lfloor \frac{m}{2^q} \rfloor} \binom{d}{n}^{-1} = \frac{n! \cdot (d-n)!}{(n-m)! \lfloor m/2^q \rfloor! \cdot (d-n+m-\lfloor m/2^q \rfloor)!}  \\
        &\leq \frac{n^{\lfloor m/2^q \rfloor}}{\lfloor m/2^q \rfloor!} \Big( \frac{n}{d-n} \Big)^{m-\lfloor m/2^q \rfloor} \stackrel{(*)}{\leq} \Big( \frac{en}{\lfloor m/2^q \rfloor} \Big)^{\lfloor m/2^q \rfloor} \Big( \frac{n}{d-n} \Big)^{m-\lfloor m/2^q \rfloor}  \\ 
        &\stackrel{(**)}{\leq} \Big( \frac{2^qen}{m} \Big)^{ m/2^q } \Big( \frac{n}{d-n} \Big)^{m-\lfloor m/2^q \rfloor}= \Big( \frac{2^qen}{m} \Big)^{ m/2^q } \Big( \frac{\kappa_q(d,N)^q}{1-\kappa_q(d,N)^q} \Big)^{m-\lfloor m/2^q \rfloor}  \\
    &\leq \Big( \frac{2^qen}{m} \Big)^{ m/2^q } \Big( 2\kappa_q(d,N)^q \Big)^{m-\lfloor m/2^q \rfloor} \leq \Big( \frac{2^q en}{m} \Big)^{ m/2^q } \Big( 2\kappa_q(d,N)^q \Big)^{m/2}.
    \end{split}
\end{equation} 
$(*)$ holds, since $\lfloor m/2^q \rfloor! \geq (\lfloor m/2^q \rfloor/e)^{\lfloor m/2^q \rfloor}$. In $(**)$ we used the fact that for any $a>0$ the function $(0,a/e] \ni t \mapsto (a/t)^t$ is increasing. Two last inequalities hold, since $m-\lfloor m/2^q \rfloor \geq m-m/2^q \geq m/2$ and $\kappa_q(d,N) \leq 2^{-1/q}$. \\
Using these four inequalities (i.e. \eqref{eq:2.4}, \eqref{eq:2.5}, \eqref{eq:2.6}, \eqref{eq:2.7}) and our assumptions on $\kappa_q(d,N)$ we finally obtain
\begin{equation} \label{eq:2.8}
\begin{split}    
|E_m| &\leq 2 \cdot2^n \binom{d}{n} 2^{-m}  \Big( \frac{2^qen}{m} \Big)^{ m/2^q } \Big( 2\kappa_q(d,N)^q \Big)^{m/2} 6^m \\
 &\leq  2^{1-m}  \cdot |B_N^q \cap \mathbb{Z}^d| \Big(\frac{n\kappa_q(d,N)^q}{m}\Big)^{m/2} e^{m/e}e^{m/2} 2^{m/2}  6^m\\
 &\leq  2^{1-m} |B_N^q \cap \mathbb{Z}^d| \Big(\frac{n\kappa_q(d,N)^q}{m}\Big)^{m/2} 20.211^m \leq  2^{1-m} |B_N^q \cap \mathbb{Z}^d|.
 \end{split}
 \end{equation}
Last inequality holds by the assumption $m \geq k \geq 409n \kappa_q(d,N)^q $. Due to \eqref{eq:2.3} this concludes the proof of \eqref{lem2.3eq1}.
\par Now we prove the second part, that is \eqref{lem2.3eq2}. Assume that $d, N \in \mathbb{N}$ satisfy $\kappa_q(d,N) \le N^{-11}$. We will also assume that $n \geq 2$, since in the case $N^q=n < 2$ the statement trivially holds, because then the left-hand side of \eqref{lem2.3eq2} equals $0$. 
We proceed as in the first part of the proof with minor changes. If $2 \le m<2^q$, then we have
\begin{align*}
  |E_m| \le \Big( 2\kappa_q(d,N)^q\Big)^m 2^{-m} |B_N^q \cap \Z^d| \le \Big( \frac{2}{n^{11}}\Big)^m   2^{-m} |B_N^q \cap \Z^d| \le \frac{1}{n} 2^{-m} |B_N^q \cap \Z^d|.
\end{align*}
On the other hand if $m \ge 2^q$, then by the third inequality of \eqref{eq:2.8}
we have
\begin{align*}
&|E_m| \le 2^{1-m} |B_N^q \cap \mathbb{Z}^d|  \Big(\frac{n\kappa_q(d,N)^q}{m}\Big)^{m/2} 20.211^m \le  2^{1-m} |B_N^q \cap \Z^d| \Big(\frac{n^{-10} 409}{m}\Big)^{m/2} \\
&\le 2^{1-m} |B_N^q \cap \Z^d| \Big( \frac{n^{-1}409}{2^{9}m} \Big)^{m/2}
\hspace{-0.3cm}\le  2^{1-m} |B_N^q \cap \Z^d| \Big( \frac{1}{n} \Big)^{m/2} \hspace{-0.3cm}\leq  \frac{2}{n} 2^{-m} |B_N^q \cap \Z^d|,
\end{align*}
 by \eqref{eq:2.3} with $k=2$ we get \eqref{lem2.3eq2}.  
\end{proof}
In order to prove Lemma \ref{sum_z} we need to introduce an auxiliary lemma stating that in certain range it's unlikely for points in $B_N^q \cap \Z^d$ to have a lot of big coordinates.
\begin{Lem} \label{lem:2.4}
Take any $q \geq 1$, assume that $d,N \in \N$ satisfy $\kappa_q(d,N) \in [N^{-11}, e^{-12/q}] $. Then for every $k \in \N, k>1$ we have 
\[
\Big| \Big\{ x \in B_N^q \cap \Z^d: |\{ i \in [d]: |x_i|=k \}| \geq \frac{\kappa_q(d,N)^{q/22}N^q}{k^{6}} \Big\}\Big| \lesssim  \frac{1}{N^{2q}} |B_N^q \cap \Z^d|.
\]

\end{Lem}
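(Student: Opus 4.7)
I combine a factorial-moment (high-moment Markov) bound with an injection-counting comparison of $\ell^q$-ball sizes, in the spirit of the combinatorial techniques used in the proof of Lemma \ref{lem:2.3}. Write $n = N^q$, $\kappa = \kappa_q(d,N)$, $T = \lceil \kappa^{q/22} n/k^6 \rceil$, and let $F_k$ denote the set on the left-hand side of the lemma. Put $j_k(x) = |\{i \in [d] : |x_i| = k\}|$. Since $\binom{j_k(x)}{T} \ge 1$ if and only if $j_k(x) \ge T$, switching the order of summation over $x$ and over $T$-subsets of $\{i : |x_i| = k\}$ and using the $\Sym(d)$-symmetry of $B_N^q$ yields
\begin{equation*}
|F_k| \le \sum_{x \in B_N^q \cap \Z^d} \binom{j_k(x)}{T} = \binom{d}{T} \cdot 2^T \cdot |B^*|, \qquad B^* := B_{(n-Tk^q)^{1/q}}^q \cap \Z^{d-T}.
\end{equation*}

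The heart of the proof is the comparison $|B^*|/|B_N^q \cap \Z^d| \lesssim \kappa^{qTk^q}$, which I would prove by the following injection. Any $y \in B^*$ has at least $d - n + T(k^q - 1)$ zero coordinates (since the number of nonzero coordinates is bounded by $\|y\|_q^q \le n - Tk^q$); for each pair $(S,\sigma)$ with $S \subseteq \{i : y_i = 0\}$ of size $Tk^q$ and $\sigma \in \{\pm 1\}^S$, the point $y' \in \Z^{d-T}$ obtained by replacing $y_i$ with $\sigma_i$ on $S$ satisfies $\|y'\|_q^q = \|y\|_q^q + Tk^q \le n$, so its zero-extension to $\Z^d$ lies in $B_N^q \cap \Z^d$. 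Each such extension is obtained from at most $\binom{n_1(y')}{Tk^q} \le \binom{n}{Tk^q}$ triples $(y,S,\sigma)$, parametrized by the choice of $Tk^q$ of the $\pm 1$-positions of $y'$. Double counting yields
\begin{equation*}
|B^*| \cdot \binom{d-n+T(k^q-1)}{Tk^q} \cdot 2^{Tk^q} \le \binom{n}{Tk^q} \cdot |B_N^q \cap \Z^d|,
\end{equation*}
and the binomial ratio is at most $(n/(d-n))^{Tk^q} \le (2\kappa^q)^{Tk^q}$ (valid since $\kappa^q \le 1/2$), which gives the claim.

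Combining the two steps with $\binom{d}{T} \le (ed/T)^T$, $d = n/\kappa^q$, and $T \ge \kappa^{q/22} n/k^6$ leads to
\begin{equation*}
\frac{|F_k|}{|B_N^q \cap \Z^d|} \lesssim \Big(\frac{2en \kappa^{q(k^q-1)}}{T}\Big)^T \le \bigl(2ek^6 \kappa^{q(k^q-23/22)}\bigr)^T =: \alpha_k^T.
\end{equation*}
The upper bound $\kappa \le e^{-12/q}$ gives $\alpha_k \le 2ek^6 e^{-12(k^q-23/22)} < 1$ uniformly for $k \ge 2$ (the worst case being $k = 2$). The lower bound $\kappa \ge N^{-11}$ forces $\kappa^{q/22} \ge n^{-1/2}$, and hence $T \ge n^{1/2}/k^6$ whenever $t_k \ge 1$. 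A short calculation in the parameter $\nu := q\log(1/\kappa)/22 \in [6/11, \log n/2]$ then verifies $T \log(1/\alpha_k) \ge 2\log n$ in both the high-moment regime $t_k \ge 1$ and the degenerate regime $t_k < 1$ (where $T = 1$ forces $k$ to be so large that $\alpha_k \le n^{-2}$ holds directly). Small values of $N$, for which $1/N^{2q}$ is bounded below by an absolute constant, are absorbed into the $\lesssim$-constant via the trivial bound $|F_k| \le |B_N^q \cap \Z^d|$.

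The main technical obstacle is the careful bookkeeping in the injection-counting step and the uniform verification of $T\log(1/\alpha_k) \ge 2\log n$ at the boundary case $k = 2$ with $\kappa$ near $N^{-11}$, which requires both endpoints of the range $\kappa \in [N^{-11}, e^{-12/q}]$ to be exploited simultaneously; each side of this range individually is insufficient.
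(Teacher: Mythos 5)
Your approach is correct in substance but genuinely different in organization from the paper's. The paper proceeds by a ``local'' decomposition: it writes $x = u(x) + v(x)$ (separating the coordinates that are neither $\pm 1$ nor $\pm k$), partitions $B_N^q \cap \Z^d$ into sets $A(u,l,m)$ indexed by the background $u$ and the multiplicities $l$, $m$ of $\pm k$ and $\pm 1$ coordinates, and proves the pointwise comparison $|A(u,l,m)| \lesssim n^{-3}|A(u,0,m+\lfloor lk^q\rfloor)|$ when $l$ is large, summing up afterwards. You instead use a ``global'' high-moment Markov bound, $|F_k| \le \sum_x \binom{j_k(x)}{T}$, evaluate the moment exactly as $\binom{d}{T}2^T|B^*|$ by switching the order of summation and invoking $\Sym(d)$-invariance, and then compare $|B^*|$ to $|B_N^q \cap \Z^d|$ by a double-counting injection. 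Both routes hinge on the same combinatorial trade — replacing each $\pm k$ coordinate by (roughly) $k^q$ copies of $\pm 1$, which is cheap because $\pm 1$'s vastly outnumber $\pm k$'s in the ball — and both use the binomial-ratio bound $\binom{n}{m}\binom{D}{m}^{-1} \le (n/D)^m$ with $D \approx d$. Your moment method packages the argument more cleanly and is closer in spirit to standard concentration proofs; the paper's version is somewhat more ad hoc but tracks the exact arithmetic more tightly.

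One point you should carry through more carefully is the integrality bookkeeping: since $Tk^q$ need not be an integer, the subsets $S$ in your injection must have size $\lfloor Tk^q\rfloor$, and the resulting comparison is $|B^*|/|B_N^q\cap\Z^d| \le \kappa^{q\lfloor Tk^q\rfloor}$ rather than $\kappa^{qTk^q}$. This costs an extra factor $\kappa^{-q} \le N^{11q} = n^{11}$ in the final chain (so the target inequality becomes $T\log(1/\alpha_k) \gtrsim 13\log n$, not $2\log n$), which is absorbed since $T\log(1/\alpha_k)$ grows at least like $n^{1/2}$; but since you flag the boundary case $k=2$, $\kappa \approx N^{-11}$ as the critical one, this extra $n^{11}$ loss deserves to be explicitly tracked there. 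With that repair, and with the observation that $B^* = \emptyset$ (hence $F_k = \emptyset$) whenever $Tk^q > n$, the argument is sound.
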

Proof of this lemma is just based on removing coordinates $\pm k$ and increasing number of $\pm 1$ coordinates.
\begin{proof} Let $n=N^q$.
Fix $k \in \N, k>1$. Every $x \in \Z^d$ can be written as $x=u(x)+v(x)$, where
\[
u(x)_j= \begin{cases}
    x_j, \ \ \text{if $|x_j| \notin \{1,k \} $}, \\
    0, \ \ \text{otherwise}.
\end{cases}
\]
Let $U(B_N^q \cap \Z^d)= \big\{u(x): x \in B_N^q \cap \Z^d \big\}$. Then we have the following disjoint decomposition
\begin{equation} \label{eq:2.9}
    B_N^q \cap \Z^d= \bigcup_{u \in U(B_N \cap \Z^d)} \bigcup_{\substack{0 \le l,m \le n
    }} A(u,l,m),
\end{equation}
where
\[
A(u,l,m)=\Big\{ x \in B_N^q \cap \Z^d: u(x)=u, |\{i \in [d]: |x_i|=k\}|=l, |\{i \in [d]: |x_i|=1\}|=m \Big\}.
\]
We will prove the following. If $l \geq \frac{\kappa_q(d,N)^{q/22}n}{k^6}$, then for all $u$ and $m$ we have
\begin{equation} \label{eq:2.10}
|A(u,l,m)| \lesssim \frac{1}{n^3} |A(u,0,m+\lfloor l k^q \rfloor)|.
\end{equation}
Note that the above suffices to prove Lemma \ref{lem:2.4}, indeed, we have
\begin{align*}
& \Big|\Big\{ x \in B_N^q \cap \Z^d: |\{ i \in [d]: |x_i|=k \}| \geq \frac{\kappa_q(d,N)^{q/22}N^q}{k^6} \Big\}\Big|  =\hspace{-0.5cm} \sum_{u \in U(B_N^q \cap \Z^d)} \sum_{0 \le m \le n} \sum_{\frac{\kappa_q(d,N)^{q/22}n}{k^6} \le l \le n}\hspace{-0.7cm} |A(u,l,m)| \\
&\lesssim  \frac{1}{n^3}\hspace{-0.3cm} \sum_{u \in U(B_N^q \cap \Z^d)} \sum_{0 \le m \le n} \sum_{\frac{\kappa_q(d,N)^{q/22}n}{k^6} \le l \le n}\hspace{-0.7cm} |A(u,0,m+\lfloor l k^q \rfloor)| 
\lesssim \frac{1}{n^2}\hspace{-0.3cm} \sum_{u \in U(B_N^q \cap \Z^d)} \sum_{0 \le t \le n} |A(u,0,t)| \lesssim \hspace{-0.1cm}\frac{1}{n^2} |B_N^q \cap \Z^d|,
\end{align*}
penultimate inequality follows from the fact, that for each $t \in [0,n] \cap \N_0$ there are at most $n+1$ pairs of numbers $m,l \in [0,n] \cap \N_0$ such that $t=m+\lfloor l k^q \rfloor$. \par Now we will prove \eqref{eq:2.10}, take any $u \in U(B_N^q \cap \Z^d)$, $l,m \in [0,n] \cap \N_0$. We can assume that 
\[
m+lk^q + \|u \|_q^q \le n,
\]
otherwise $A(u,l,m)= \emptyset$. Let $|u|=|\supp(u)| \le n $. Then we have
\begin{equation} \label{eq:2.11}
    \begin{split}
&\frac{|A(u,l,m)|}{|A(u,0,m+\lfloor l k^q \rfloor)|}= 2^{m+l} \binom{d-|u|}{l} \binom{d-|u|-l}{m} \cdot \Bigg( 2^{m+\lfloor l k^q \rfloor}   \binom{d-|u|}{m+\lfloor l k^q \rfloor} \Bigg)^{-1} \\
&= 2^{l-\lfloor lk^q \rfloor} \frac{(d-|u|)!}{l! (d-|u|-l)!} \cdot \frac{(d-|u|-l)!}{m!(d-|u|-l-m)!} \frac{(m+\lfloor l k^q \rfloor)! (d-|u|-m-\lfloor l k^q \rfloor)!}{(d-|u|)!} \\
&\le 2^{l-\lfloor lk^q \rfloor} \frac{(m+\lfloor l k^q \rfloor)!}{m! l!} \cdot \frac{(d-|u|-m-\lfloor l k^q \rfloor)!}{(d-|u|-l-m)!} \\
&\le 2^{l-\lfloor lk^q \rfloor} \frac{(m+\lfloor l k^q \rfloor)^{\lfloor l k^q \rfloor} e^{l}}{l^l} (d-|u|-m-\lfloor l k^q \rfloor)^{-\lfloor l k^q \rfloor+l} \leq n^{\lfloor l k^q \rfloor} e^l l^{-l} d^{-\lfloor l k^q \rfloor+l}.
\end{split}
\end{equation}
Above we have used the fact that $l! \ge l^l e^{-l}$ and
\[
|u|+m+\lfloor l k^q \rfloor \le \|u\|_q^q+m+\lfloor l k^q \rfloor \le n \le d/2.
\]
Further simplifying right-hand side of last inequality of \eqref{eq:2.11} and incorporating assumptions on $l$ and $k$ we obtain
\begin{equation} \label{eq:2.12}
    \begin{split}
        &\frac{|A(u,l,m)|}{|A(u,0,m+\lfloor l k^q \rfloor)|}\le n^{\lfloor l k^q \rfloor} e^l l^{-l} d^{-\lfloor l k^q \rfloor+l}= \Big( \kappa_q(d,N)^q \Big)^{\lfloor l k^q \rfloor} \Big( \frac{e d}{l} \Big)^{l} \\
& \le \Big( \kappa_q(d,N)^q \Big)^{\lfloor l k^q \rfloor}  \Big(e k^6 \kappa_q(d,N)^{-23q/22} \Big)^{l} \le \Big( \kappa_q(d,N)^q \Big)^{lk} \Big( \kappa_q(d,N)^{q} \Big)^{-23l/22}   k^{6l} e^l\\
&\le \kappa_q(d,N)^{qlk(1-23/44)} \cdot 2^{6lk}  \cdot e^l\le e^{
kl(6 \log2+1/2-\frac{12\cdot21}{44})
}\le e^{-lk}.
    \end{split}
\end{equation}
Last inequality follows from the assumption $\kappa_q(d,N) \le e^{-12/q}$. In view of \eqref{eq:2.12}, in order to proof \eqref{eq:2.10} and hence Lemma \ref{lem:2.4} , it suffices to show that for all $k \geq 2$ and $l \in \N_0, l \ge \frac{\kappa_q(d,N)^{q/22}n}{k^6}$ we have
\begin{equation} \label{eq:2.13}
    e^{-lk} \lesssim \frac{1}{n^3}.
\end{equation}
We will consider two cases. \\
\textbf{If $k \le n^{1/24}$.} Then by $\kappa_q(d,N)^q \ge n^{-11}$ we get
\[
l \ge \frac{\kappa_q(d,N)^{q/22}n}{k^6} \ge \frac{n^{1/2}}{k^6} \ge n^{1/4},
\]
hence
$e^{-kl} \le e^{-2n^{1/4}} \lesssim\frac{1}{n^3}$,
thus \eqref{eq:2.13} holds. \\
\textbf{If $k \ge n^{1/24}$.} Then assumptions $l \in \N_0$ and $l \ge \frac{\kappa_q(d,N)^{q/22}n}{k^{6}}>0$ imply $l \geq 1$. From this we obtain
\[
e^{-kl} \le e^{-n^{1/24}} \lesssim \frac{1}{n^3}.
\]
Thus in any case \eqref{eq:2.13} holds, this concludes the proof of \eqref{eq:2.10} and hence concludes the proof of Lemma \ref{lem:2.4}.
\end{proof}
Now we can introduce a lemma, which will come in handy during the proof of Proposition \ref{prop:3.5}. 
\begin{Lem}
    \label{sum_z} For every $q \geq 1$ and $d,N \in \N$ satisfying $\kappa_q(d,N) \leq e^{-12/q}$ we have
    \[
    \Big|\Big\{ x \in B_N^q \cap \Z^d: \sum_{\substack{i=1 \\|x_i| \geq 2}}^d x_i^4 >\kappa_q(d,N)^{q/22} N^q \Big\}\Big| \lesssim \frac{1}{N^q} |B_N^q \cap \Z^d|.
    \]
\end{Lem}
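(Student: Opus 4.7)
The plan is to split on the size of $\kappa_q(d,N)$ and reduce to the two available tools: inequality \eqref{lem2.3eq2} of Lemma \ref{lem:2.3} in the regime $\kappa_q(d,N) \le N^{-11}$, and Lemma \ref{lem:2.4} in the complementary regime $\kappa_q(d,N) \in (N^{-11}, e^{-12/q}]$. In each case I would match the exceptional set of the relevant auxiliary lemma to the set we want to control.

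First I would treat the small-$\kappa$ regime, where \eqref{lem2.3eq2} already provides a bound of the right order. For any $x \in B_N^q \cap \Z^d$ outside the exceptional set in \eqref{lem2.3eq2} one has $|\{i \in [d]: x_i = \pm 1\}| > N^q - 2$, whence the constraint $\|x\|_q^q \le N^q$ forces
\[
\sum_{|x_i| \ge 2} |x_i|^q \;<\; 2 \;\le\; 2^q.
\]
Since each summand is at least $2^q$, no coordinate with $|x_i| \ge 2$ can occur, so $\sum_{|x_i| \ge 2} x_i^4 = 0$. Hence the set in the statement is contained in the exceptional set of \eqref{lem2.3eq2}, whose size is at most $N^{-q}|B_N^q \cap \Z^d|$.

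Next I would handle the moderate-$\kappa$ regime by decomposing the forbidden sum according to coordinate value. With $n_k(x) = |\{i \in [d]: |x_i| = k\}|$, and using that $n_k(x) = 0$ for $k > N$, the forbidden sum equals $\sum_{k=2}^{N} k^4 n_k(x)$. For each integer $k \in \{2, \dots, N\}$ let $S_k$ denote the exceptional set produced by Lemma \ref{lem:2.4}, so that $|S_k| \lesssim N^{-2q}|B_N^q \cap \Z^d|$. For any $x \notin \bigcup_{k=2}^{N} S_k$ one obtains
\[
\sum_{k=2}^{N} k^4 n_k(x) \;<\; \kappa_q(d,N)^{q/22} N^q \sum_{k=2}^{N} \frac{1}{k^2} \;<\; \kappa_q(d,N)^{q/22} N^q,
\]
so $x$ is not in the set of interest. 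A union bound then yields total measure $\lesssim N \cdot N^{-2q}|B_N^q \cap \Z^d| \le N^{-q}|B_N^q \cap \Z^d|$, using $q \ge 1$.

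No genuine obstacle is anticipated: once the two regimes are separated, the argument is essentially bookkeeping. The only point of finesse is observing that the exponent $k^{-6}$ in Lemma \ref{lem:2.4} is exactly what permits the $k^4$ weight to be absorbed into a convergent $k^{-2}$ tail, while the $N^{-2q}$ saving from the same lemma leaves enough room to pay for the $N$-fold union bound.
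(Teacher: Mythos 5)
Your proof is correct and follows essentially the same two-regime strategy as the paper: in the range $\kappa_q(d,N)\le N^{-11}$ you use \eqref{lem2.3eq2} together with the same observation that any coordinate of modulus at least $2$ forces $|\{i: x_i=\pm1\}|\le N^q-2$, and in the complementary range you use Lemma \ref{lem:2.4} with a union bound over $k$ and the convergent $\sum_{k\ge 2}k^{-2}<1$ tail. The only cosmetic difference is that you take the union over $2\le k\le N$ rather than $2\le k\le N^q$ as the paper does, which is a harmless (indeed slightly sharper) variant.
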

\begin{proof}  We will consider two cases \\
\textbf{Case 1) If $\kappa_q(d,N) \le N^{-11}$.} Then by Lemma \ref{lem:2.3} we get 
\begin{align*}
    \Big|\Big\{ x \in B_N^q \cap \Z^d: \sum_{\substack{i=1 \\|x_i| \geq 2}}^d x_i^4 >0\Big\}\Big| \le | \lbrace x \in B_N^q \cap \mathbb{Z}^d : |\lbrace i \in [d]:x_i= \pm 1 \rbrace | \leq N^q-2 \rbrace| \le \frac{1}{N^q} |B_N^q \cap \mathbb{Z}^d|,
\end{align*}
so the claim of Lemma \ref{sum_z} holds. 
\\
\textbf{Case 2) If $\kappa_q(d,N) \in [N^{-11}, e^{-12/q}]$.} In this case we can apply Lemma \ref{lem:2.4}. Let 
\[
E=\Big\{ x \in B_N^q \cap \Z^d: (\exists k )  |\{ i \in [d]:  |x_i|=k \} | \ge \frac{\kappa_q(d,N)^{q/22} N^q}{k^{6}}\Big\}.
\]
Then by Lemma \ref{lem:2.4} we have
\begin{equation} \label{eq:2.14}
    |E| \le \sum_{2 \le k \le N^q} \Big| \Big\{ x \in B_N^q \cap \Z^d:  |\{ i \in [d]:  |x_i|=k \} | \ge \frac{\kappa_q(d,N)^{q/22} N^q}{k^{6}}\Big\} \Big| \lesssim \frac{1}{N^q} |B_N \cap \Z^d|.
\end{equation}
Moreover, notice that for all $x \in B_N^q \cap \Z^d \setminus E$ we have 
\begin{equation*}
 \sum_{\substack{i=1, \\ |x_i| \ge 2}}^d x_i^4= \sum_{k \geq 2} k^4 |\{ i \in [d]:  |x_i|=k \} | \le \sum_{k \ge 2} k^4 \cdot \frac{\kappa_q(d,N)^{q/22} N^q}{k^{6}}<\kappa_q(d,N)^{q/22} N^q.
\end{equation*}
The above combined with \eqref{eq:2.14} gives
\begin{equation*}
     \Big|\Big\{ x \in B_N^q \cap \Z^d: \sum_{\substack{i=1 \\|x_i| \geq 2}}^d x_i^4 > \kappa_q(d,N)^{q/22} N^q \Big\}\Big| \le |E| \lesssim \frac{1}{N^q} |B_N^q \cap \Z^d|.
\end{equation*}
Thus in any case conclusion of Lemma \ref{sum_z} holds.
\end{proof}
Lemma \ref{lem:2.6} will be useful to combine various bounds coming from item \eqref{p3.4(2)} of Proposition \ref{prop:3.4}.
\begin{Lem}[{\cite[Lemma 2.6]{balls}}]
\label{lem:2.6} Assume that we have a sequence $(u_j: j \in [d])$ with $0 \leq u_j \leq \frac{1- \delta_0}{2}$ for some $\delta_0 \in (0,1)$. Suppose that $I \subseteq [d]$ satisfies $\delta_1d \leq |I| \leq d$ for some $ \delta_1 \in (0,1]$. Then for every $J=(d_0,d] \cap \Z$ with $0 \leq d_0 \leq d$ we have
\[
\frac{1}{d!}\sum_{\tau \in \Sym(d)} \exp\Big(- \sum_{j \in \tau(I) \cap J} u_j\Big) \leq 3 \exp\Big(-\frac{\delta_0 \delta_1}{20} \sum_{j \in J} u_j \Big).
\]
\end{Lem}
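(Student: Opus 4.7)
The plan is to view the average over $\Sym(d)$ as an expectation against a uniformly random permutation $\tau$, which converts the statement into a concentration bound for a simple random sample.

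First, I would rewrite the left-hand side as
\[
\mathbb{E}_\tau \exp\Big(-\sum_{j \in J} u_j X_j\Big), \qquad X_j := \mathds{1}_{\{j \in \tau(I)\}}.
\]
Under uniform $\tau \in \Sym(d)$, the set $\tau(I)$ is uniformly distributed over all $|I|$-element subsets of $[d]$, so $(X_j)_{j \in [d]}$ is the indicator vector of a simple random sample without replacement; in particular $\mathbb{E}X_j = |I|/d \ge \delta_1$ for each $j$.

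Next, I would invoke the classical fact (Joag-Dev--Proschan) that indicators of a simple random sample are negatively associated. Since the functions $x \mapsto e^{-u_j x}$ are all non-increasing (because $u_j \ge 0$), the standard consequence of negative association yields
\[
\mathbb{E}_\tau \exp\Big(-\sum_{j \in J} u_j X_j\Big) \;\le\; \prod_{j \in J} \mathbb{E}_\tau e^{-u_j X_j} \;=\; \prod_{j \in J} \Big(1 - \tfrac{|I|}{d}\bigl(1-e^{-u_j}\bigr)\Big).
\]
The assumption $u_j \le (1-\delta_0)/2$ combined with the elementary inequality $1-e^{-u} \ge u(1-u/2)$ for $u \ge 0$ gives $1-e^{-u_j} \ge \tfrac{3+\delta_0}{4}u_j$. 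Using $|I|/d \ge \delta_1$ and $1-t \le e^{-t}$, the product is bounded by $\exp\bigl(-\tfrac{\delta_1(3+\delta_0)}{4}\sum_{j \in J} u_j\bigr)$, which comfortably beats the claimed $3\exp\bigl(-\tfrac{\delta_0\delta_1}{20}\sum_{j \in J} u_j\bigr)$. So both the prefactor $3$ and the constant $1/20$ in the statement are slack.

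The main obstacle is justifying the negative association step: while it is a well-known property of sampling without replacement, it is not a one-line fact. If one wanted to avoid that machinery, a viable alternative is to expand $e^{-u_j X_j} = 1 - (1-e^{-u_j})X_j$, giving an alternating sum over subsets $S \subseteq J$ weighted by $\Pr(S \subseteq \tau(I)) = \binom{|I|}{|S|}/\binom{d}{|S|} \le (|I|/d)^{|S|}$, and then compare this M\"obius-type expansion to its independent (Bernoulli) counterpart. The sign pattern $-(1-e^{-u_j}) \le 0$ together with the fact that sampling without replacement only \emph{decreases} joint occupation probabilities relative to the independent model recovers the same factored bound; the prefactor $3$ is then there precisely to absorb slack from truncating high-order terms of that expansion.
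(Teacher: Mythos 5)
Your main argument is correct, and it proves the lemma by a genuinely different route than the paper, which in fact gives no proof at all: it simply cites \cite[Lemma 2.6]{balls} and remarks that the monotonicity hypothesis there can be dropped by a rearrangement argument from \cite[Lemma 6.7]{MiSzWr}. The original proof in \cite{balls} is a hands-on combinatorial estimate on the distribution of $|\tau(I)\cap J|$ under a random permutation (which is where the monotonicity assumption, the prefactor $3$, and the constant $\tfrac{1}{20}$ come from). Your route instead observes that $\tau(I)$ is a uniform $|I|$-subset, so $(X_j)_{j\in[d]}$, $X_j=\mathds{1}_{\{j\in\tau(I)\}}$, is the indicator vector of a sample without replacement, and invokes the Joag-Dev--Proschan negative association of such indicators to factor the exponential moment: $\mathbb{E}\exp\bigl(-\sum_{j\in J}u_jX_j\bigr)\le\prod_{j\in J}\bigl(1-\tfrac{|I|}{d}(1-e^{-u_j})\bigr)\le\exp\bigl(-\tfrac{(3+\delta_0)\delta_1}{4}\sum_{j\in J}u_j\bigr)$, using $1-e^{-u}\ge u(1-u/2)$ and $u_j\le\tfrac{1-\delta_0}{2}$. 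All steps check out (NA is preserved under applying coordinatewise non-increasing maps, and yields the product bound for non-negative variables), and you get a strictly stronger conclusion with no prefactor and no monotonicity assumption; the price is importing the negative-association theorem, whereas the cited proof is self-contained.

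One caveat: your fallback sketch avoiding negative association does not work as stated. Expanding $e^{-u_jX_j}=1-(1-e^{-u_j})X_j$ gives an alternating sum $\sum_{S\subseteq J}(-1)^{|S|}\prod_{j\in S}(1-e^{-u_j})\Pr(S\subseteq\tau(I))$, and the termwise bound $\Pr(S\subseteq\tau(I))\le(|I|/d)^{|S|}$ goes the wrong way on the odd-$|S|$ terms, so "joint occupation probabilities only decrease" does not by itself recover the factored bound; some genuine correlation-inequality input (essentially NA again) is needed there. Since you present this only as an alternative, the proof as a whole stands on the negative-association argument, which is sound.
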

\noindent In \cite[Lemma 2.6]{balls} there is an extra assumption, that sequence $(u_j: j \in [d])$ is decreasing, however this assumption can be removed by rearranging the sequence, see \cite[Lemma 6.7]{MiSzWr}. Assumption regarding upper bound on $u_j$ seems to be arbitrary, for this reason we remove it.
\begin{Cor}
   \label{Cor:2.7}Let $M \in \N$. Assume that we have a sequence $(u_j: j \in [d])$ with $0 \leq u_j \leq M \cdot \frac{1- \delta_0}{2}$ for some $\delta_0 \in (0,1)$. Suppose that $I \subseteq [d]$ satisfies $\delta_1d \leq |I| \leq d$ for some $ \delta_1 \in (0,1]$. Then for every $J=(d_0,d] \cap \Z$ with $0 \leq d_0 \leq d$ we have
\[
\frac{1}{d!}\sum_{\tau \in \Sym(d)} \exp\Big(- \sum_{j \in \tau(I) \cap J} u_j\Big) \leq 3 \exp\Big(-\frac{\delta_0 \delta_1}{20M} \sum_{j \in J} u_j \Big).
\] 
\end{Cor}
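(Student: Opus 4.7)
The plan is to deduce Corollary \ref{Cor:2.7} from Lemma \ref{lem:2.6} by a simple rescaling. The key observation is that the new hypothesis $0 \le u_j \le M \cdot \frac{1-\delta_0}{2}$ is exactly $M$ times the hypothesis of Lemma \ref{lem:2.6}, so normalizing the sequence by $M$ puts us into the setting of the earlier lemma.

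Concretely, I would set $v_j := u_j/M$ for each $j \in [d]$. Since $M \in \N$ and $0 \le u_j \le M \cdot \frac{1-\delta_0}{2}$, the sequence $(v_j)$ satisfies $0 \le v_j \le \frac{1-\delta_0}{2}$, and the sets $I$, $J$ still obey the same size conditions. Applying Lemma \ref{lem:2.6} to $(v_j)$ directly yields
\[
\frac{1}{d!}\sum_{\tau \in \Sym(d)} \exp\Big(-\sum_{j \in \tau(I) \cap J} v_j\Big) \le 3 \exp\Big(-\frac{\delta_0 \delta_1}{20} \sum_{j \in J} v_j\Big) = 3 \exp\Big(-\frac{\delta_0 \delta_1}{20 M} \sum_{j \in J} u_j\Big),
\]
which is exactly the target right-hand side.

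To complete the argument, I would compare the two left-hand sides. Because $M \ge 1$ and $u_j \ge 0$, we have $u_j \ge v_j$, and hence for every permutation $\tau$
\[
\exp\Big(-\sum_{j \in \tau(I) \cap J} u_j\Big) \le \exp\Big(-\sum_{j \in \tau(I) \cap J} v_j\Big).
\]
Averaging this pointwise bound over $\tau \in \Sym(d)$ and chaining with the inequality above closes the argument.

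There is essentially no obstacle here: the only thing to check is that Lemma \ref{lem:2.6} genuinely applies to $(v_j)$, which is immediate from the hypothesis, and that the monotonicity of $\exp(-\,\cdot\,)$ allows us to replace $v_j$ by the larger $u_j$ inside the exponential on the left without breaking the inequality. The factor $1/M$ in the conclusion arises cleanly from rewriting $\sum v_j = M^{-1} \sum u_j$ on the right.
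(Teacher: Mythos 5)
Your proof is correct and is essentially identical to the paper's: both arguments replace $u_j$ by $v_j = u_j/M$ inside the exponential (using $M \ge 1$ and monotonicity of $\exp(-\,\cdot\,)$ to justify the replacement on the left), then apply Lemma \ref{lem:2.6} to the rescaled sequence. The only difference is the order in which you chain the two inequalities, which is immaterial.
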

\begin{proof}
 \[
\frac{1}{d!}\sum_{\tau \in \Sym(d)} \exp\Big(- \sum_{j \in \tau(I) \cap J} u_j\Big) \leq \frac{1}{d!}\sum_{\tau \in \Sym(d)} \exp\Big(- \sum_{j \in \tau(I) \cap J} \frac{u_j}{M}\Big) \le 3 \exp\Big(-\frac{\delta_0 \delta_1}{20M} \sum_{j \in J} u_j \Big).
\]    
\end{proof}

\section{Replacement by an auxiliary multiplier symbol}
In this section using we first define $\beta_n^J$ which is simpler multiplier than $m_N^q$ defined in section \ref{sec1.1}. Then we will obtain crucial bounds on $\beta_n^J$, which combined with lemmas of previous section will allow us to bound $m_N^q$. The ideas of this section are contained in the proof of \cite[Proposition 3.3]{balls}. We give slightly refined approach, which appeared in very big generality in \cite[Sections 2,4]{NW}.
\begin{Defn}
    For every $n,d \in \N$, $J\subseteq [d]$ with $n \leq |J|$ we define $\beta_n^J: \T^d \to \mathbb{C}$ by the formula
    \[
    \beta_{n}^J(\xi)=  \frac{1}{|D_n^J|} \sum_{x \in D_n^J}e( x \cdot \xi),
    \]
    where
    \[
    D_n^J=\Big\{ y \in \{-1,0,1\}^d: |\{i \in [d]: |y_i|=1\}|=n, \supp(y) \subseteq J \Big\}.
    \]
\end{Defn}
In order to better understand $\beta_n^J(\xi)$ it is useful to recall definition and basic properties of Krawtchouk polynomials.
\begin{Defn}
(Krawtchouk polynomial). For every $n \in \N _0$,
$k \in \lbrace 0,1,...,n \rbrace$ and $x \in \mathbb{R}$ we define $k$-th Krawtchouk polynomial by the formula
\[ \kr _k^{(n)}(x)= \frac{1}{\binom{n}{k}} \sum_{j=0}^k (-1)^j \binom{x}{j} \binom{n-x}{k-j}, 
\]
if $x$ is not integer, then we use Pochhammer symbol  $\binom{x}{j}=\frac{x(x-1)...(x-j+1)}{j!}$. \label{def:3.2}
\end{Defn}
Next theorem describes important facts regarding Krawtchouk polynomials.
\begin{Thm} \label{thm:3.3}
For every $n \in \N _0$ and integers $x,k \in [0,n]$ we have
\begin{enumerate}
    \item{Symmetry:} $\kr _k^{(n)}(x)= \kr_x^{(n)}(k)$.
    \item{Reflection symmetry:} $\kr ^{(n)}_k(n-x)=(-1)^k \kr _k^{(n)}(x)  $.
    \item{Uniform bound:} \label{thm:3.3.5}There is a constant $c \in (0,1)$ (independent of $n$) such that for any $x,k \leq n/2$ we have
    \[|\kr ^{(n)}_k(x)| \leq e^{-ckx/n}.\]
\end{enumerate} 
\end{Thm}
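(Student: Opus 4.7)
I would handle the three items separately, since they call for quite different arguments. For the symmetry (1), my plan is a double-counting argument. Fixing $T \subseteq [n]$ with $|T|=x$ and grouping the $k$-subsets $S$ by $|S \cap T|$, the defining formula of $\kr_k^{(n)}$ gives $\binom{n}{k}\kr_k^{(n)}(x) = \sum_{|S|=k}(-1)^{|S \cap T|}$ for any such $T$. Averaging over the $\binom{n}{x}$ choices of $T$ yields
\[
  \binom{n}{x}\binom{n}{k}\kr_k^{(n)}(x) = \sum_{|S|=k,\, |T|=x}(-1)^{|S \cap T|}.
\]
The right-hand side is manifestly invariant under swapping the roles of $S$ and $T$, so it equals the corresponding expression with $k$ and $x$ interchanged; the identity follows.

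For the reflection symmetry (2), I would first derive the generating function identity
\[
  \sum_{k=0}^n \binom{n}{k}\kr_k^{(n)}(x)\, z^k = (1-z)^x(1+z)^{n-x}
\]
by expanding both factors as binomial series and reading off the $z^k$ coefficient. The reflection formula then follows by a single substitution: the generating function at parameter $n-x$ equals the generating function at $x$ evaluated at $-z$, i.e.\
\[
  \sum_k \binom{n}{k}\kr_k^{(n)}(n-x)\, z^k = (1+z)^x(1-z)^{n-x} = \sum_k (-1)^k\binom{n}{k}\kr_k^{(n)}(x)z^k,
\]
so matching coefficients gives $\kr_k^{(n)}(n-x) = (-1)^k\kr_k^{(n)}(x)$.

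Item (3) is where the real work lies. I would combine Cauchy's integral formula with the generating function from (2) to obtain, for any $r > 0$,
\[
  |\kr_k^{(n)}(x)| \leq \binom{n}{k}^{-1} \max_{|z|=r} |1-z|^x|1+z|^{n-x}\, r^{-k}.
\]
A direct $\theta$-differentiation of $|1-re^{i\theta}|^x|1+re^{i\theta}|^{n-x}$ locates, when it exists, an interior critical point at which the maximum equals $2^{n/2}(1+r^2)^{n/2}(x/n)^{x/2}((n-x)/n)^{(n-x)/2}$. Choosing $r^2 = k/(n-k)$, which is legitimate because $k \le n/2$, minimises the $r$-dependent factor $(1+r^2)^{n/2}r^{-k}$. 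After applying Stirling's estimate to $\binom{n}{k}$ and writing $\alpha = k/n$, $\beta = x/n$, the desired bound reduces to an entropy inequality of the form $H(\alpha)+H(\beta) \ge \log 2 + c\alpha\beta$ with $H$ the binary Shannon entropy and $c > 0$ absolute.

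The hard part, and what I expect to be the main obstacle, is that this entropy inequality is tight at $\alpha = 0$ or $\beta = 0$, so the saddle-point estimate becomes loose in that corner. In the degenerate regime I would switch to the probabilistic representation $\kr_k^{(n)}(x) = \mathbb{E}[(-1)^H]$ with $H \sim \mathrm{Hypergeom}(n,x,k)$, using (1) to reduce to $k \le x$ so that the coupling with a binomial variable $\mathrm{Bin}(k,x/n)$ is cleanest, and then exploit the binomial bound $(1-2x/n)^k \le e^{-2kx/n}$ for $x \le n/2$. Piecing the saddle-point estimate in the bulk with this binomial comparison in the corner, and choosing $c$ small enough to absorb the matching error, should yield the uniform bound claimed in (3).
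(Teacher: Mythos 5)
The paper does not actually prove Theorem~\ref{thm:3.3}: points (1)--(2) are attributed to \cite{412678} and point (3) is cited as \cite[Lemma~2.2]{v010a003}. So what you have written is a proof the paper never supplies, and it deserves to be judged on its own terms.

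Your arguments for (1) and (2) are correct and clean. For (1), the identity $\binom{n}{k}\kr_k^{(n)}(x)=\sum_{|S|=k}(-1)^{|S\cap T|}$ for a fixed $T$ with $|T|=x$ is exactly the combinatorial meaning of the defining sum, and averaging over $T$ symmetrises it. For (2), the generating function $\sum_k\binom{n}{k}\kr_k^{(n)}(x)z^k=(1-z)^x(1+z)^{n-x}$ follows by multiplying out binomial series, and the sign flip under $z\mapsto -z$ gives the reflection formula immediately. Both of these are the standard proofs.

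For (3), however, there is a genuine gap in the corner regime, and you have not closed it. First, the interior saddle point you use exists only when $(n-2x)^2\le 4k(n-k)$; outside that region the maximum of $|1-z|^x|1+z|^{n-x}$ on $|z|=r$ sits at the endpoint $z=r$ and has a different form, so your displayed expression for the maximum is not always valid. You correctly observe that the resulting entropy inequality $H(\alpha)+H(\beta)\ge\log 2+c\alpha\beta$ fails near $\alpha=0$ or $\beta=0$, which is precisely the same corner. The proposed remedy via $\kr_k^{(n)}(x)=\mathbb{E}\big[(-1)^H\big]$, $H\sim\mathrm{Hypergeom}(n,x,k)$, and a coupling with $\mathrm{Bin}(k,x/n)$, does not go through: a coupling or stochastic-domination argument controls expectations of \emph{monotone} (or, via Hoeffding's without-replacement comparison, \emph{convex}) functions of $H$, whereas $(-1)^H$ is neither; there is no inequality of the form $|\mathbb{E}_{\mathrm{Hyper}}[(-1)^H]|\le|\mathbb{E}_{\mathrm{Bin}}[(-1)^{H'}]|$ that one can simply invoke. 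The numerical closeness of $\kr_k^{(n)}(x)$ to $(1-2x/n)^k$ for small $x$ is real, but it has to be established by an explicit estimate, not by transporting the binomial bound across the coupling. As written, the corner case is therefore a plan rather than a proof, and since the full statement of (3) is exactly the content of the cited \cite[Lemma~2.2]{v010a003}, the honest move here is either to reproduce that proof or to keep the citation as the paper does.
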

Proof of the first two points is simple and contained in \cite{412678}, the last point is \cite[Lemma 2.2]{v010a003}.
Now we can state two crucial bounds for the multiplier symbol $\beta_n^J$. From now on $c$ will denote constant from the theorem above.
\begin{Prop} \label{prop:3.4}
 For every $n,d \in \N$, $J\subseteq [d]$ with $n \leq |J|/2$ and $\xi \in \mathbb{T}^d$ we have
     \[ \tag{1} \label{p3.4(1)}
     |\beta_n^J(\xi)-1| \leq  2\frac{n}{|J|} \sum_{i \in J} \sin^2(\pi \xi_i),
     \]
     \[ \tag{2} \label{p3.4(2)}
     |\beta_n^J(\xi)| \leq 2 e^{- \frac{cn}{2|J|} \min\big( \sum_{i \in J} \sin^2(\pi \xi_i), \sum_{i \in J} \cos^2(\pi \xi_i)  \big)}.
     \]
\end{Prop}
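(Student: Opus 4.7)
The plan is to first rewrite $\beta_n^J(\xi)$ by performing the inner summation over signs. Grouping points of $D_n^J$ by their support $S \subseteq J$ (with $|S|=n$) and summing $\pm 1$ in each coordinate, one gets
\[
\beta_n^J(\xi) = \frac{1}{\binom{|J|}{n}} \sum_{\substack{S \subseteq J \\ |S|=n}} \prod_{i \in S} \cos(2\pi\xi_i),
\]
so $\beta_n^J$ is, up to normalization, the $n$-th elementary symmetric polynomial in the variables $\cos(2\pi\xi_i)$, $i \in J$. For \eqref{p3.4(1)} I would then apply the telescoping bound $|1 - \prod_i a_i| \le \sum_i |1 - a_i|$, valid whenever $|a_i| \le 1$, with $a_i = \cos(2\pi\xi_i)$. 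Each fixed $i \in J$ belongs to exactly $\binom{|J|-1}{n-1}$ of the $n$-subsets, so after dividing by $\binom{|J|}{n}$ one gets the factor $n/|J|$; combined with $1 - \cos(2\pi\xi_i) = 2\sin^2(\pi\xi_i)$ this yields \eqref{p3.4(1)} on the nose.

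For \eqref{p3.4(2)} the plan is to interpret $\beta_n^J(\xi)$ probabilistically via Krawtchouk polynomials. Let $(\epsilon_i)_{i \in J}$ be independent $\pm 1$ variables with $\mathbb{P}(\epsilon_i = -1) = \sin^2(\pi\xi_i)$, so that $\mathbb{E}[\epsilon_i] = \cos(2\pi\xi_i)$, and set $T = \{i \in J : \epsilon_i = -1\}$, $K = |T|$. Swapping the outer average over $S$ with the expectation and using the identity $\prod_{i \in S}\epsilon_i = (-1)^{|S \cap T|}$, the inner average over $S$ is recognised (directly from Definition \ref{def:3.2}) as $\kr_n^{(|J|)}(K)$, giving $\beta_n^J(\xi) = \mathbb{E}[\kr_n^{(|J|)}(K)]$. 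Combining Theorem \ref{thm:3.3}(3) with the reflection symmetry Theorem \ref{thm:3.3}(2) then yields $|\kr_n^{(|J|)}(k)| \le e^{-cn\min(k,|J|-k)/|J|}$ for every $k \in [0,|J|]$ (both halves requiring $n \le |J|/2$). Splitting $e^{-\min(a,b)} \le e^{-a} + e^{-b}$ reduces the problem to two Laplace transforms of the Bernoulli sum $K$, each of which factorises; applying the elementary bound $1 - e^{-x} \ge x/2$ on $[0,1]$ (valid since $cn/|J| \le 1/2$) converts each factor into the exponential of $-(cn/(2|J|))\sum_{i \in J} \sin^2(\pi\xi_i)$, respectively $-(cn/(2|J|))\sum_{i \in J} \cos^2(\pi\xi_i)$. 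Adding them and estimating $a + b \le 2\max(a,b)$ produces \eqref{p3.4(2)} together with the constant $2$.

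No single step looks individually hard; the main point of care is the bookkeeping around the reflection symmetry, which is what makes the two symmetric alternatives $\sum \sin^2$ vs.\ $\sum \cos^2$ appear inside the minimum. The hypothesis $n \le |J|/2$ is used twice, once to invoke the Krawtchouk decay in Theorem \ref{thm:3.3}(3) on both halves $K \le |J|/2$ and $K > |J|/2$, and once more to justify the linear lower bound $1 - e^{-cn/|J|} \ge cn/(2|J|)$; I would keep track of both occurrences explicitly to make sure the final constants are clean.
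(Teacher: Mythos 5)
Your proposal is correct and follows essentially the same route as the paper: the same elementary-symmetric-polynomial representation of $\beta_n^J$ with the telescoping bound for \eqref{p3.4(1)}, and for \eqref{p3.4(2)} the same identification of $\beta_n^J(\xi)$ as a weighted average of Krawtchouk values combined with Theorem \ref{thm:3.3} and the reflection symmetry, followed by the same factorization and elementary exponential inequalities. Your probabilistic phrasing (expectation of $\kr_n^{(|J|)}(K)$ over independent $\pm 1$ variables with $\mathbb{P}(\epsilon_i=-1)=\sin^2(\pi\xi_i)$) is just a repackaging of the paper's explicit sum over subsets $U \subseteq J$ with weights $\prod_{i \in J\setminus U}\cos^2(\pi\xi_i)\prod_{i\in U}\sin^2(\pi\xi_i)$.
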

\begin{proof}
    We will prove item \eqref{p3.4(1)} first. 
Notice that for any $\epsilon \in \lbrace -1,1 \rbrace^d$ we have 
\begin{align*} 
x \in D_n^J
\iff \epsilon  x \in D_n^J,
\end{align*}
where $\epsilon x=(\epsilon_1 x_1,..., \epsilon_d x_d)$.
This implies that
\[\beta_n^J(\xi)= \frac{1}{|D_n^J|} \sum_{ \epsilon  x \in D_n^J} e(\epsilon  x \cdot \xi)= \frac{1}{|D_n^J|} \sum_{ x \in D_n^J} e( \epsilon  x \cdot \xi). \]
Hence
\begin{align*}
\beta_n^J(\xi)&= \frac{1}{2^d} \sum_{\epsilon \in \lbrace -1,1 \rbrace^d} \beta_n^J(\xi)= \frac{1}{2^d} \sum_{\epsilon \in \lbrace -1,1 \rbrace^d} \frac{1}{|D_n^J|} \sum_{ x \in D_n^J} e( \epsilon  x \cdot \xi)
\\
&=\frac{1}{|D_n^J|} \sum_{ x \in D_n^J} \frac{1}{2^d} \sum_{\epsilon \in \lbrace -1,1 \rbrace^d} e( \epsilon  x \cdot \xi)= \frac{1}{|D_n^J|} \sum_{ x \in D_n^J} \prod_{j=1}^d \cos(2 \pi x_j \xi_j).
\end{align*}
Note that for any sequences of complex numbers $\lbrace a_j \rbrace_{j=1}^d, \lbrace b_j \rbrace_{j=1}^d $ such that $\max_{1 \leq j \leq d} |a_j| \leq 1$ and $\max_{1 \leq j \leq d} |b_j| \leq 1$ we have
\begin{equation}   
\Big| \prod_{j=1}^d a_j - \prod_{j=1}^d b_j \Big| \leq \sum_{j=1}^d |a_j-b_j|, \label{eq:3.1}
\end{equation}

this follows from the formula $\prod_{j=1}^d a_j-\prod_{j=1}^db_j= \sum_{j=1}^d\Big((a_j-b_j)\prod_{l=1}^{j-1}a_l \prod_{k=j+1}^db_k \Big).$ 
Using \eqref{eq:3.1} and the formula $\cos(2x)=1-2\sin^2(x)$ we obtain
\begin{align*}
&|\beta_n^J(\xi)-1|  \leq \frac{1}{|D_n^J|} \sum_{ x \in D_n^J} \Big| \prod_{j=1}^d \cos(2 \pi x_j \xi_j)-1 \Big| \leq \frac{1}{|D_n^J|} \sum_{ x \in D_n^J} \sum_{j=1}^d |\cos(2 \pi x_j \xi_j)-1| \\ 
&= \frac{2}{|D_n^J|} \sum_{ x \in D_n} \sum_{j=1}^d \sin^2(\pi x_j \xi_j) = \frac{2}{|D_n^J|} \sum_{x \in D_n^J} \sum_{j \in \supp(x) } \sin^2(\pi\xi_j)  \\
&= \frac{2}{|D_n^J|} \sum_{j \in J} \sin^2(\pi\xi_j)  | \{ x \in D_n^J: |x_j|=1 \}|
=2 \sum_{j \in J}\sin^2(\pi\xi_j) \cdot \frac{2^{n} \binom{|J|-1}{n-1}}{2^n \binom{|J|}{n}}= 2\frac{n}{|J|}\sum_{j \in J} \sin^2(2 \pi \xi_j).
\end{align*}
This finishes the proof of point \eqref{p3.4(1)} of the proposition. 
\par Now we will prove point \eqref{p3.4(2)} of the proposition. We have
\begin{align*}
&\beta_n^J(\xi)= \frac{1}{|D_n^J|} \sum_{x \in D_n^J} \prod_{j=1}^d \cos(2 \pi x_j \xi_j) \\
&= \frac{1}{2^n \binom{|J|}{n}} \sum_{\substack{I \subseteq J, \\ |I|=n}} \prod_{i \in I} \cos(2 \pi \xi_i) \cdot | \{ x \in D_{n}^J: \supp(x)=I \}|
= \frac{1}{\binom{|J|}{n}} \sum_{\substack{I \subseteq J, \\ |I|=n}} \prod_{i \in I} \cos(2 \pi \xi_i).
\end{align*}
Fix $I \subseteq J$, let $\epsilon(I) \in \{-1,1\}^J$ be such that $\epsilon(I)_i=-1$ exactly when $i \in I$, then we have
\[
\prod_{i \in I} \cos(2 \pi \xi_i)= \prod_{i \in J}\bigg( \frac{1+\cos(2 \pi \xi_i)}{2} + \epsilon(I)_i \frac{1-\cos(2\pi \xi_i)}{2} \bigg) \]\[
= \prod_{i \in J} \big( \cos^2( \pi \xi_i)+ \epsilon(I)_i\sin^2(\pi \xi_i) \big)
= \sum_{U \subseteq J}w_U(\epsilon(I)) \prod_{i \in J \setminus U} \cos^2( \pi \xi_i) \prod_{i \in U} \sin^2(\pi \xi_i),
\]
where we have defined $w_U:\{-1,1\}^J\to \{-1,1 \}$ by the formula $w_U(\epsilon)= \prod_{i \in U} \epsilon_i$. Notice that for any $U \subseteq J$ the following holds
\begin{align*}
&\frac{1}{\binom{|J|}{n}} \sum_{\substack{I \subseteq J, \\ |I|=n}} w_U(\epsilon(I))= \frac{1}{\binom{|J|}{n}} \sum_{\substack{I \subseteq J, \\ |I|=n}} (-1)^{|U \cap I|} = \frac{1}{\binom{|J|}{n}} \sum_{m=0}^n (-1)^m \cdot |\{I \subseteq J: |I|=n, |U \cap I|=m \}| \\
&=\frac{1}{\binom{|J|}{n}} \sum_{m=0}^n (-1)^m \binom{|U|}{m} \binom{|J|-|U|}{n-m}=\kr_n^{(|J|)}(|U|).
\end{align*}
From the above we get that 
\begin{equation} \label{eq:3.2}
\begin{split}
    &\beta_n^J(\xi)=\frac{1}{\binom{|J|}{n}} \sum_{\substack{I \subseteq J, \\ |I|=n}} \prod_{i \in I} \cos(2 \pi \xi_i)= \frac{1}{\binom{|J|}{n}} \sum_{\substack{I \subseteq J, \\ |I|=n}} \sum_{U \subseteq J}w_U(\epsilon(I)) \prod_{i \in J \setminus U} \cos^2( \pi \xi_i) \prod_{i \in U} \sin^2(\pi \xi_i) \\
    &=\hspace{-0.2cm}\sum_{U \subseteq J} \prod_{i \in J \setminus U} \cos^2( \pi \xi_i) \prod_{i \in U} \sin^2(\pi \xi_i)  \frac{1}{\binom{|J|}{n}} \sum_{\substack{I \subseteq J, \\ |I|=n}} w_U(\epsilon(I)) =\hspace{-0.2cm}\sum_{U \subseteq J} \prod_{i \in J \setminus U} \cos^2( \pi \xi_i) \prod_{i \in U} \sin^2(\pi \xi_i) \kr_n^{(|J|)}(|U|).
\end{split}
\end{equation}
Now we will use Theorem \ref{thm:3.3} to bound $\kr_n^{(|J|)}(|U|)$ for every $U \subseteq J$. If $|U| \leq |J|/2$, then by the last point of Theorem \ref{thm:3.3} we have
\[
|\kr_{n}^{(|J|)}(|U|)| \le e^{-\frac{cn|U|}{|J|}},
\]
on the other hand if $|U|>|J|/2$, then using last two points of Theorem \ref{thm:3.3} we get
\[
|\kr_{n}^{(|J|)}(|U|)|=|\kr_{n}^{(|J|)}(|J|-|U|)| \le e^{-\frac{cn(|J|-|U|)}{|J|}}.
\]
In either case we obtain
\[
|\kr_{n}^{(|J|)}(|U|)| \le e^{-\frac{cn|U|}{|J|}}+e^{-\frac{cn(|J|-|U|)}{|J|}}.
\]
Plugging the above into \eqref{eq:3.2} we get
\begin{equation} \label{eq:3.3}
\begin{split}
&|\beta_n^J(\xi)| \le \sum_{U \subseteq J} \prod_{i \in J \setminus U} \cos^2( \pi \xi_i) \prod_{i \in U} \sin^2(\pi \xi_i) |\kr_n^d(|U|)| \\
&\le \sum_{U \subseteq J} \Big( \prod_{i \in J \setminus U} \cos^2( \pi \xi_i) \prod_{i \in U} \sin^2(\pi \xi_i) \Big) e^{-\frac{cn|U|}{|J|}}+  \sum_{U \subseteq J} \Big( \prod_{i \in J \setminus U} \cos^2( \pi \xi_i) \prod_{i \in U} \sin^2(\pi \xi_i) \Big) e^{-\frac{cn(|J|-|U|)}{|J|}} \\
&= \sum_{U \subseteq J}  \prod_{i \in J \setminus U} \cos^2( \pi \xi_i) \prod_{i \in U} \big( \sin^2(\pi \xi_i) e^{-\frac{cn}{|J|}} \big)+\sum_{U \subseteq J}  \prod_{i \in J \setminus U}  \big(\cos^2( \pi \xi_i) e^{-\frac{cn}{|J|}} \big) \prod_{i \in U}  \sin^2(\pi \xi_i):=S_1+S_2.
\end{split}
\end{equation}
For $S_1$ we have
\begin{align*}
    S_1&= \prod_{i\in J}\big( \cos^2( \pi \xi_i)+ e^{-\frac{cn}{|J|}} \sin^2( \pi \xi_i) \big) = \prod_{i \in J} \big(1-(1-e^{-\frac{cn}{|J|}}) \sin^2( \pi \xi_i) \big) \\
    &\le \prod_{i \in J} \exp\big(-(1-e^{-\frac{cn}{|J|}}) \sin^2( \pi \xi_i) \big) \le e^{ - \frac{cn}{2|J|} \sum_{i \in J} \sin^2( \pi \xi_i) },
\end{align*}
above we have used two elementary inequalities $1-x \le e^{-x}$ and $x e^{-x/2} \le 1-e^{-x}$ for $x\geq 0$.
Similarly one can prove that
\begin{equation*}
    S_2 \le e^{- \frac{cn}{2|J|} \sum_{i \in J} \cos^2( \pi \xi_i)},
\end{equation*}
combining these bounds on $S_1,S_2$ with \eqref{eq:3.3} we get the desired conclusion.
\end{proof}
We conclude this section with crucial bounds on the multiplier $m_N^q(\xi)$, which will be deduced from Proposition \ref{prop:3.4} and lemmas of section 2.
\begin{Prop} \label{prop:3.5}
Let $q \geq 1$, $d,N \in \N$ satisfy $\kappa_q(d,N) \le e^{-12/q}$. Then for every $\xi \in \T^d$ we have \\
    \item 
    \begin{equation} \label{eq:3.4}
        |m_N^q(\xi)| \lesssim e^{- \frac{c\kappa_q(d,N)^q}{320} \min\big(\|\xi\|^2, \| \xi +1/2\|^2 \big)} +\frac{1}{N^q}.
    \end{equation}
     If $\|\xi \| \le \| \xi+1/2\|$, then
    \begin{equation} \label{eq:3.5}
    |m_N^q(\xi)-1| \lesssim \kappa_q(d,N)^q \| \xi\|^2+\frac{1}{N^{q}}+ \kappa_q(d,N)^{q/50}.
    \end{equation}
     If $\|\xi \| > \| \xi+1/2\|$, then
    \begin{equation} \label{eq:3.6}
    |m_N^q(\xi)-\frac{1}{|B_N^q \cap \Z^d|} \sum_{x \in B_N^q \cap \Z^d} (-1)^{\sum_{i=1}^d x_i}| \lesssim  \kappa_q(d,N)^q \|\xi+1/2\|^2+ \frac{1}{N^{q}}+ \kappa_q(d,N)^{q/50}.
    \end{equation}
\end{Prop}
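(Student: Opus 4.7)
The plan is to decompose each lattice point $x \in B_N^q \cap \Z^d$ as $x=y+z$, where $y\in\{-1,0,1\}^d$ records the $\pm 1$-coordinates and $z$ records the coordinates with $|x_i|\ge 2$ (with disjoint supports), then express the sums defining $m_N^q(\xi)$ in terms of $\beta_n^J$-type multipliers so that Proposition \ref{prop:3.4} and Corollary \ref{Cor:2.7} apply.

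First, discard the exceptional set $E:=\{x\in B_N^q\cap\Z^d:\sum_{|x_i|\ge 2}x_i^4>\kappa_q(d,N)^{q/22}N^q\}$. By Lemma \ref{sum_z}, $|E|/|B_N^q\cap\Z^d|\lesssim 1/N^q$, which explains the $1/N^q$ term in \eqref{eq:3.4}--\eqref{eq:3.6}. On the good set $G:=(B_N^q\cap\Z^d)\setminus E$ the factorization $e(x\cdot\xi)=e(y\cdot\xi)e(z\cdot\xi)$ is available, and the $\ell^4$ control gives $|K|:=|\supp(z)|\le \tfrac{1}{16}\sum z_i^4 \le \tfrac{1}{16}\kappa_q(d,N)^{q/22}N^q$, which is $o(d)$. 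Grouping by the triple $(K,\,z|_K,\,n)$ with $n=|\supp(y)|$, the inner sum $\sum e(y\cdot\xi)$ over admissible $y\in\{-1,0,1\}^{[d]\setminus K}$ with $|\supp(y)|=n$ equals $|D_n^{[d]\setminus K}|\,\beta_n^{[d]\setminus K}(\xi)$.

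To prove \eqref{eq:3.4}, I apply Proposition \ref{prop:3.4}\eqref{p3.4(2)} to each $\beta_n^{[d]\setminus K}$, noting that $n/|[d]\setminus K|$ is comparable to $N^q/d = \kappa_q(d,N)^q$ throughout $G$. The permutation invariance of $B_N^q$ yields $m_N^q(\xi)=m_N^q(\sigma\cdot\xi)$ for every $\sigma\in\Sym(d)$, so averaging over $\sigma$ and applying Corollary \ref{Cor:2.7} (with $M=2$, $\delta_0=1/2$, and $\delta_1$ bounded below by a universal constant since $|K|=o(d)$) replaces the partial sums $\sum_{i\in[d]\setminus K}\sin^2(\pi\xi_i)$ and $\sum_{i\in[d]\setminus K}\cos^2(\pi\xi_i)$ by $\|\xi\|^2$ and $\|\xi+1/2\|^2$, with a universal constant loss absorbed into the denominator $320$. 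For \eqref{eq:3.5} I split $e(x\cdot\xi)-1=(e(y\cdot\xi)-1)e(z\cdot\xi)+(e(z\cdot\xi)-1)$; the first piece is handled exactly as above but via Proposition \ref{prop:3.4}\eqref{p3.4(1)}, giving the main term $\lesssim \kappa_q(d,N)^q\|\xi\|^2$, while the second piece is bounded using $|e(z\cdot\xi)-1|\le 2\sum_{i\in K}|z_i||\sin(\pi\xi_i)|$ (via $|\sin(n\theta)|\le n|\sin\theta|$), the $\ell^4$ control $\sum z_i^2\le\tfrac14\kappa_q(d,N)^{q/22}N^q$ on $G$, Cauchy--Schwarz, permutation averaging, and Lemma \ref{lem:2.4} (controlling the number of points with many large fixed-size coordinates), eventually producing the uniform error $\lesssim \kappa_q(d,N)^{q/50}$. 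Finally, \eqref{eq:3.6} follows from \eqref{eq:3.5} applied to $\xi+\tfrac12$, via the identity $e(x\cdot\xi)=(-1)^{\sum_i x_i}e(x\cdot(\xi+\tfrac12))$.

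The main obstacle is the second piece of \eqref{eq:3.5}: extracting a clean uniform-in-$\xi$ bound of size $\kappa_q(d,N)^{q/50}$ from the $\ell^4$ control in Lemma \ref{sum_z} requires careful bookkeeping balancing the exponent $1/22$ from Lemma \ref{sum_z}, losses from Cauchy--Schwarz and permutation averaging, and a case split that handles large values of $\|\xi\|^2$ (for which the main term $\kappa_q(d,N)^q\|\xi\|^2$ already dominates, so that the trivial bound $|e(z\cdot\xi)-1|\le 2$ suffices).
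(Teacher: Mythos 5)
Your overall blueprint is the same as the paper's (write $x=y(x)+z(x)$, reduce the inner sums to $\beta_k^J$, apply Proposition \ref{prop:3.4} together with permutation averaging via Corollary \ref{Cor:2.7}, and control the large coordinates through the $\ell^4$ bound of Lemma \ref{sum_z}), but there are genuine gaps. First, your good set only enforces the $\ell^4$ condition, and the assertion that $n/|[d]\setminus K|$ is comparable to $\kappa_q(d,N)^q$ ``throughout $G$'' (with $n=|\supp(y)|$) is false: points such as $x=0$, or points with only a handful of nonzero coordinates, pass the $\ell^4$ test yet have $n\ll N^q$, and for them Proposition \ref{prop:3.4}\eqref{p3.4(2)} gives essentially no decay, so \eqref{eq:3.4} does not follow as written. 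One must also discard the points with $|\{i:|x_i|=1\}|\le N^q/2$; their proportion is $\lesssim 2^{-N^q/2}$ by Lemma \ref{lem:2.3} with $k=\lfloor N^q/2\rfloor$ (this is exactly how the paper defines its good set), and you never invoke Lemma \ref{lem:2.3} at all.

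Second, the part you yourself flag as the main obstacle is in fact not closed by the tools you list. Once you split off $\frac{1}{|B_N^q\cap\Z^d|}\sum_{x}(e(z(x)\cdot\xi)-1)$ on its own, you have detached the phase error from the exponentially decaying $y$-sum; after reducing to $\kappa_q(d,N)^q\|\xi\|^2\lesssim 1$, the absolute-value estimates you propose ($|e(z\cdot\xi)-1|\le 2\sum_i|z_i||\sin(\pi\xi_i)|$, Cauchy--Schwarz, permutation averaging, Lemma \ref{lem:2.4}) give, for $\xi$ spread over many coordinates, bounds of size about $\kappa_q(d,N)^{q/22}N^{q/2}$, which is not $O(\kappa_q(d,N)^{q/50})$; the loss comes from discarding the sign cancellation in $\sum_x(e(z\cdot\xi)-1)$. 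The paper avoids this by keeping the factor $e(z\cdot\xi)-1$ multiplied against the decaying $\beta$-sums and using H\"older with exponents $(100/99,100)$, Corollary \ref{Cor:2.7} and the $\ell^4$ control, so that $e^{-\frac{c\kappa_q(d,N)^q}{10^6}\|\xi\|^2}\|\xi\|^2\lesssim \kappa_q(d,N)^{-q}$ absorbs the growth in $\|\xi\|$; your sketch contains neither this mechanism nor a substitute. Finally, \eqref{eq:3.6} is not a formal consequence of \eqref{eq:3.5}: the identity $e(x\cdot\xi)=(-1)^{\sum_i x_i}e(x\cdot(\xi+\tfrac12))$ turns $m_N^q(\xi)$ into the \emph{signed} multiplier $\frac{1}{|B_N^q\cap\Z^d|}\sum_x(-1)^{\sum_i x_i}e(x\cdot(\xi+\tfrac12))$, not into $m_N^q(\xi+\tfrac12)$, and \eqref{eq:3.5} says nothing about increments of that signed multiplier; one must rerun the whole argument carrying the signs $(-1)^{\sum_j z_j}(-1)^k$ (the paper's $\widetilde{\phi_N^q}$), which is routine but is a separate step, not an application of \eqref{eq:3.5} at $\xi+\tfrac12$.
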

It is worth pointing out, that inequalities \eqref{eq:3.5}, \eqref{eq:3.6}  in the case $q \ge 2$ have way simpler proofs relying on H\"older's inequality, see for instance \cite[Proposition 4.1]{Bq}.
\begin{proof}
    Let $n=N^q$. We start with proof of \eqref{eq:3.4}. Let
    \begin{align*}
        E&= \Big\{x \in B_N^q \cap \Z^d: |\{ i \in [d]: |x_i|=1 \}|>n/2, \sum_{\substack{i=1 \\|x_i| \geq 2}}^d x_i^4 \le \kappa_q(d,N)^{q/22} n \Big\}, 
    \end{align*}
     by Lemmas \ref{lem:2.3} (with $k=\lfloor n/2 \rfloor$) and \ref{sum_z} we have
    \begin{align} \label{eq:3.7}
    |B_N^q \cap \Z^d \setminus E| &\lesssim \Big(\frac{1}{n}+2^{-n/2}\Big) |B_N^q \cap \Z^d| \lesssim \frac{1}{n} |B_N^q \cap \Z^d|.
    \end{align}
    We define
    \begin{align*}
        s_N^q(\xi)= \frac{1}{|B_N^q \cap \Z^d|} \sum_{x \in E} e(x \cdot \xi),  
    \end{align*}
    From \eqref{eq:3.7} we obtain
    \begin{equation} \label{eq:3.8}
    |m_N^q(\xi)-s_N^q(\xi)| \lesssim \frac{1}{n},
    \end{equation}
    so it suffices to show that 
    \[
    |s_N^q(\xi)| \lesssim e^{- \frac{c\kappa_q(d,N)^q}{320} \min\big(\|\xi\|^2, \| \xi +1/2\|^2 \big)}.
    \]
    We decompose any $x \in \Z^d$ into $x=y(x)+z(x)$, where
    \[
z(x)_j= \begin{cases}
    x_j, \ \ \text{if $|x_j| \geq 2$} \\
    0, \ \ \text{otherwise}
\end{cases}
\]
and denote $Z(E)=\{z(x): x \in E \}$. Note that we have the following disjoint decomposition of the set $E$.
\[
E= \bigcup_{z \in Z(E)} \bigcup_{n/2<k \leq n-\|z\|_q^q} \Big(z+ \Big\{y \in \{-1,0,1\}^d: \supp(y) \cap \supp(z)= \emptyset, |\{i \in [d]: |y_i|=1 \}|=k\Big\}\Big),
\]
using the above we get that
\begin{equation} \label{eq:3.9}
 \begin{split} 
    s_N^q(\xi)&= \frac{1}{|B_N^q\cap \Z^d|} \sum_{x \in E} e(x \cdot \xi)= \frac{1}{|B_N^q\cap \Z^d|}\sum_{z \in Z(E)} e(z \cdot \xi) \sum_{n/2<k \leq n-\|z \|_q^q} \sum_{\substack{y \in \{-1,0,1\}^d, \\ |\{i \in [d]: |y_i|=1\}|=k, \\
    \supp(y) \cap \supp(z)= \emptyset}} e(y \cdot \xi)
 \end{split}   
\end{equation}
By Proposition \ref{prop:3.4} for any $z \in Z(E)$, $k \in \N$, such that $ n/2 < k \le n -\|z\|_q^q \le \frac{d-|\supp(z)|}{2}  $, let $J= [d] \setminus \supp(z)$, then we have
\begin{align*}
&\bigg|\sum_{\substack{y \in \{-1,0,1\}^d, \\ |\{i \in [d]: |y_i|=1\}|=k, \\
    \supp(y) \cap \supp(z)= \emptyset}} e(y \cdot \xi) \bigg|= |D_k^{J}| |\beta_k^{ J}(\xi)| \leq 2|D_k^{J}| e^{-\frac{ck}{2|J|} \min\big( \sum_{j \in J} \sin^2(\pi \xi_j), \sum_{j \in J} \cos^2(\pi \xi_j) \big)}  \\
& \le  2 e^{-\frac{cn}{4d} \min\big( \sum_{j \in [d] \setminus \supp(z)} \sin^2(\pi \xi_j), \sum_{j \in [d] \setminus \supp(z)} \cos^2(\pi \xi_j) \big)} \cdot 2^k \binom{d-|\supp(z)|}{k}.
\end{align*}
Plugging the above into \eqref{eq:3.9} we get
\begin{align*}
    |s_N^q(\xi)| &\le \frac{2}{|B_N^q\cap \Z^d|}\sum_{z \in Z(E)} \sum_{n/2<k \leq n-\|z \|_q^q} 2^k \binom{d-|\supp(z)|}{k} \exp\Big(-\frac{cn}{4d} \sum_{j \in [d] \setminus \supp(z)} \sin^2(\pi \xi_j) \Big) \\
    &+ \frac{2}{|B_N^q\cap \Z^d|}\sum_{z \in Z(E)}  \sum_{n/2<k \leq n-\|z \|_q^q} 2^k \binom{d-|\supp(z)|}{k} \exp\Big(-\frac{cn}{4d} \sum_{j \in [d] \setminus \supp(z)} \cos^2(\pi \xi_j) \Big).
\end{align*}
We will bound only the first term above, for the second summand treatment is analogous. Note that by symmetry invariance of the set $Z(E)$ and Lemma \ref{lem:2.6} (with $\delta_0=\delta_1=1/2$) one has
\begin{align*}
    &\frac{2}{|B_N^q\cap \Z^d|}\sum_{z \in Z(E)} \sum_{n/2<k \leq n-\|z \|_q^q} 2^k \binom{d-|\supp(z)|}{k} \exp\Big(-\frac{cn}{4d} \sum_{j \in [d] \setminus \supp(z)} \sin^2(\pi \xi_j) \Big) \\
    &=\frac{2}{|B_N^q\cap \Z^d|}\sum_{z \in Z(E)} \sum_{n/2<k \leq n-\|z \|_q^q} 2^k \binom{d-|\supp(z)|}{k} \frac{1}{d!} \sum_{\sigma \in \Sym(d)} \exp\Big(-\frac{cn}{4d} \sum_{j \in \sigma\big( [d] \setminus \supp(z)\big)} \sin^2(\pi \xi_j) \Big) \\
    &\le \frac{8}{|B_N^q\cap \Z^d|}\sum_{z \in Z(E)} \sum_{n/2<k \leq n-\|z \|_q^q} 2^k \binom{d-|\supp(z)|}{k}  e^{- \frac{cn}{320d} \| \xi \|^2}= 8 s_N^q(0)e^{- \frac{cn}{320d} \| \xi \|^2} \le 8 e^{- \frac{cn}{320d} \| \xi \|^2}.
\end{align*}
Similarly one can show that 
\[
\frac{2}{|B_N^q\cap \Z^d|}\sum_{z \in Z(E)}  \sum_{n/2<k \leq n-\|z \|_q^q} 2^k \binom{d-|\supp(z)|}{k} \exp\Big(-\frac{cn}{4d} \sum_{j \in [d] \setminus \supp(z)} \cos^2(\pi \xi_j) \Big) \le 8 e^{- \frac{cn}{320d} \| \xi +1/2\|^2}.
\]
Since $\frac{n}{d}=\kappa_q(d,N)^q$, the above combined with \eqref{eq:3.8} finished proof of \eqref{eq:3.4}. 
%%%%%%%%%%%%%%%%%%%%%%%%
\par Now we will prove the inequality \eqref{eq:3.5}. As in the statement of the inequality assume that 
\[
\|\xi \| \leq \| \xi +1/2 \|.
\]
We define set $E$ and multiplier $s_N^q(\xi)$ as before.
Let 
\[
\phi_N^q(\xi)= \frac{1}{|B_N^q\cap \Z^d|}\sum_{z \in Z(E)}  \sum_{n/2<k \leq n-\|z \|_q^q} \sum_{\substack{y \in \{-1,0,1\}^d, \\ |\{i \in [d]: |y_i|=1\}|=k, \\
    \supp(y) \cap \supp(z)= \emptyset}} e(y \cdot \xi).
\]
From inequality $|\prod_{j=1}^da_j-1|\le \sum_{j=1}^d |a_j-1|$ ,valid if $\sup_{j}|a_j| \leq 1$, for any $z \in Z(E)$ we have
\begin{equation} \label{eq:3.10}
|e(z \cdot \xi)-1| \le \sum_{j=1}^d |e(z_j \xi_j)-1|=2 \sum_{j=1}^d \sin^2(z_j \xi_j) \leq 2 \sum_{j=1}^d \sin^2(\pi\xi_j) z_j^2.     
\end{equation}
At the end we have used inequality $|\sin(xt)| \le |x||\sin(t)|$, which holds for all $x \in \Z$ and $t \in \mathbb{R}$. Using the above, assumption on $\xi$,  \eqref{eq:3.9} and Proposition \ref{prop:3.4} we obtain
\begin{equation} \label{eq:3.11}
\begin{split}
    &|s_N^q(\xi)-\phi_N^q(\xi)| \le \frac{1}{|B_N^q\cap \Z^d|}\sum_{z \in Z(E)} \Big|e(z \cdot \xi)-1\Big| \sum_{n/2<k \leq n-\|z \|_q^q} \bigg| \sum_{\substack{y \in \{-1,0,1\}^d, \\ |\{i \in [d]: |y_i|=1\}|=k, \\
    \supp(y) \cap \supp(z)= \emptyset}} e(y \cdot \xi) \bigg|\\
   &\le\frac{4}{|B_N^q\cap \Z^d|}\sum_{z \in Z(E)}\sum_{j=1}^d \sin^2(\pi \xi_j) z_j^2 \sum_{n/2<k \leq n-\|z \|_q^q}
    2^k \binom{d-|\supp(z)|}{k} \exp\Big(-\frac{cn}{4d} \sum_{i \in [d] \setminus \supp(z)} \sin^2(\pi \xi_i) \Big) \\
    &+\frac{4}{|B_N^q\cap \Z^d|}\sum_{z \in Z(E)}\sum_{j=1}^d \sin^2(\pi \xi_j) z_j^2 \sum_{n/2<k \leq n-\|z \|_q^q}
    2^k \binom{d-|\supp(z)|}{k} \exp\Big(-\frac{cn}{4d} \sum_{i \in [d] \setminus \supp(z)} \cos^2(\pi \xi_i) \Big)
\end{split}
\end{equation}
We will investigate each of the terms above separately.
For the quarter of the first term by H\"older's inequality for exponent pair $(\frac{100}{99},100)$ and Corollary \ref{Cor:2.7} we have 
\begin{equation}
    \begin{split} \label{eq:3.12}
    &\frac{1}{|B_N^q\cap \Z^d|}\sum_{z \in Z(E)}\sum_{j=1}^d \sin^2(\pi \xi_j) z_j^2 \sum_{n/2<k \leq n-\|z \|_q^q}
    2^k \binom{d-|\supp(z)|}{k} \exp\Big(-\frac{cn}{4d} \sum_{i \in [d] \setminus \supp(z)} \sin^2(\pi \xi_i) \Big) \\
    &=\sum_{j=1}^d \sin^2(\pi \xi_j) \frac{1}{|B_N^q\cap \Z^d|}\sum_{z \in Z(E)}  \sum_{n/2<k \leq n-\|z \|_q^q}
    2^k \binom{d-|\supp(z)|}{k} z_j^2\exp\Big(-\frac{cn}{4d} \sum_{i \in [d] \setminus \supp(z)} \sin^2(\pi \xi_i) \Big) \\
    &\le \sum_{j=1}^d \sin^2(\pi \xi_j) \bigg( \frac{1}{|B_N^q\cap \Z^d|}\sum_{z \in Z(E)}  \sum_{n/2<k \leq n-\|z \|_q^q}
    2^k \binom{d-|\supp(z)|}{k} |z_j|^{200/99} \bigg)^{99/100} \\
    &\cdot
    \bigg( \frac{1}{|B_N^q\cap \Z^d|}\sum_{z \in Z(E)}  \sum_{n/2<k \leq n-\|z \|_q^q}
    2^k \binom{d-|\supp(z)|}{k} \exp\Big(-25\frac{cn}{d} \sum_{i \in [d] \setminus \supp(z)} \sin^2(\pi \xi_i) \Big) \bigg)^{1/100} \\
    &\lesssim
    e^{- \frac{cn}{10^6d} \| \xi\|^2} \sum_{j=1}^d \sin^2(\pi \xi_j) \bigg( \frac{1}{|B_N^q\cap \Z^d|}\sum_{z \in Z(E)}  \sum_{n/2<k \leq n-\|z \|_q^q}
    2^k \binom{d-|\supp(z)|}{k} z_j^4 \bigg)^{99/100}.
\end{split}
\end{equation}
By analogous arguments and assumption $\| \xi \| \le \| \xi+1/2 \|$ for the quarter of the second term of \eqref{eq:3.11} we have
\begin{equation}
    \begin{split}\label{eq:3.13}
    &\frac{1}{|B_N^q\cap \Z^d|}\sum_{z \in Z(E)}\sum_{j=1}^d \sin^2(\pi \xi_j) z_j^2 \sum_{n/2<k \leq n-\|z \|_q^q}
    2^k \binom{d-|\supp(z)|}{k} \exp\Big(-\frac{cn}{4d} \sum_{i \in [d] \setminus \supp(z)} \cos^2(\pi \xi_i) \Big) \\
    &\cdot
    e^{- \frac{cn}{10^6d} \| \xi+1/2\|^2} \sum_{j=1}^d \sin^2(\pi \xi_j)\bigg( \frac{1}{|B_N^q\cap \Z^d|}\sum_{z \in Z(E)}  \sum_{n/2<k \leq n-\|z \|_q^q}
    2^k \binom{d-|\supp(z)|}{k} z_j^4 \bigg)^{99/100} \\
    &\lesssim
    e^{- \frac{cn}{10^6d} \| \xi\|^2} \sum_{j=1}^d \sin^2(\pi \xi_j) \bigg( \frac{1}{|B_N^q\cap \Z^d|}\sum_{z \in Z(E)}  \sum_{n/2<k \leq n-\|z \|_q^q}
    2^k \binom{d-|\supp(z)|}{k} z_j^4 \bigg)^{99/100}.
\end{split}
\end{equation}
Notice that by the invariance of the set $\Big\{z \in Z(E): \|z \|_q^q \le n-k,  |\supp(z)|=u \Big\}$ under the action of $\Sym(d)$, and definition of the set $E$ we obtain
\begin{equation}
\begin{split} \label{eq:3.14}
    &\frac{1}{|B_N^q\cap \Z^d|}\sum_{z \in Z(E)} \sum_{n/2<k \leq n-\|z \|_q^q}
    2^k \binom{d-|\supp(z)|}{k} z_j^4\\
    &=\frac{1}{|B_N^q\cap \Z^d|}\sum_{n/2<k \leq n} \sum_{u=0}^d \cdot 2^k \binom{d-u}{k} \hspace{-0.4cm}  \sum_{\substack{z \in Z(E), \\ \|z \|_q^q \le n-k, \\ |\supp(z)|=u}}\hspace{-0.4cm}  z_j^4 =\frac{1}{|B_N^q\cap \Z^d|}\sum_{n/2<k \leq n} \sum_{u=0}^d \cdot 2^k \binom{d-u}{k} \frac{1}{d}\hspace{-0.4cm}  \sum_{\substack{z \in Z(E), \\ \|z \|_q^q \le n-k, \\ |\supp(z)|=u}}\hspace{-0.3cm} \|z\|_4^4 \\
    &\le \frac{ \kappa_q(d,N)^{q/22}n}{d} \frac{1}{|B_N^q\cap \Z^d|}\sum_{n/2<k \leq n} \sum_{u=0}^d \cdot 2^k \binom{d-u}{k}  \sum_{\substack{z \in Z(E), \\ \|z \|_q^q \le n-k, \\ |\supp(z)|=u}} 1= \kappa_q(d,N)^{23q/22} s_N^q(0).
    \end{split}
\end{equation}
Using \eqref{eq:3.11},\eqref{eq:3.12},\eqref{eq:3.13} and \eqref{eq:3.14} we get
\begin{equation} \label{eq:3.15}
\begin{split}
    &|s_N^q(\xi)-\phi_N^q(\xi)| \lesssim e^{- \frac{cn}{10^6d} \| \xi\|^2} \sum_{j=1}^d \sin^2(\pi \xi_j) \bigg( \frac{1}{|B_N^q\cap \Z^d|}\sum_{z \in Z(E)}  \sum_{n/2<k \leq n-\|z \|_q^q}
    2^k \binom{d-|\supp(z)|}{k} z_j^4 \bigg)^{99/100} \\
    &\lesssim e^{- \frac{cn}{10^6d} \| \xi\|^2} \|\xi \|^2 \kappa_q(d,N)^{\frac{23*99}{22*100}q} \lesssim  \kappa_q(d,N)^{\frac{23*99}{22*100}q-q} \lesssim  \kappa_q(d,N)^{q/50}.
\end{split}
\end{equation}
 Combining \eqref{eq:3.8} and \eqref{eq:3.15} we have
\begin{equation}
    \label{eq:3.16}
    |m_N^q(\xi)-\phi_N^q(\xi)| \lesssim \frac{1}{n}+ \kappa_q(d,N)^{q/50} .
\end{equation}
So in order to prove \eqref{eq:3.5} it suffices to show that
\begin{equation} \label{eq:3.17}
|\phi_N^q(\xi)-1| \lesssim  \kappa_q(d,N)^q \| \xi\|^2 + \frac{1}{n}.
\end{equation}
This follows from \eqref{eq:3.7} and Proposition \ref{prop:3.4}, indeed we have
\begin{align*}
&|\phi_N^q(\xi)-1|= \bigg|\phi_N^q(\xi)-s_N^q(0)-\frac{|B_N^q \cap \Z^d \setminus E|}{|B_N^q \cap \Z^d|}\bigg| \lesssim |\phi_N^q(\xi)-s_N^q(0)|+ \frac{1}{n} \\
&\le \frac{1}{|B_N^q\cap \Z^d|}\sum_{z \in Z(E)}  \sum_{n/2<k \leq n-\|z \|_q^q} \bigg|\sum_{\substack{y \in \{-1,0,1\}^d, \\ |\{i \in [d]: |y_i|=1\}|=k, \\
    \supp(y) \cap \supp(z)= \emptyset}} e(y \cdot \xi)-1 \bigg|+ \frac{1}{n} \\
    &\lesssim  \frac{1}{|B_N^q\cap \Z^d|}\sum_{z \in Z(E)}  \sum_{n/2<k \leq n-\|z \|_q^q} 2^k \binom{d-|\supp(z)|}{k} \frac{k}{d-|\supp(z)|}\sum_{j \in [d] \setminus \supp(z)} \sin^2(\pi \xi_j)+ \frac{1}{n}\\
    &\lesssim \frac{n}{d} \sum_{j=1}^d \sin^2(\pi \xi_j) s_N^q(0)+ \frac{1}{n} \le \kappa_q(d,N)^q \|\xi \|^2+ \frac{1}{n},
\end{align*}
above we've used the fact that $|\supp(z)| \le n \le d/2$.
This finished the proof \eqref{eq:3.17} and hence the proof of \eqref{eq:3.5}. 
\par To finish the proof of Proposition \ref{prop:3.5} it remains to prove \eqref{eq:3.6}, due to major similarities with \eqref{eq:3.5}, we will give less details in its proof. Assume that $\xi$ satisfies 
\[
\|\xi+1/2\|< \|\xi\|,
\]
define $\eta \in \T^d$ by $\eta_j=\xi_j+1/2$.
For $E$, $s_N^q(\xi)$ defined as earlier, notice that
\begin{equation*}
    s_N^q(\xi)=
    \frac{1}{|B_N^q\cap \Z^d|}\sum_{z \in Z(E)} (-1)^{\sum_{j=1}^d z_j}e(z \cdot \eta) \hspace{-0.5cm}\sum_{n/2<k \leq n-\|z \|_q^q} \hspace{-0.5cm}(-1)^k\hspace{-0.6cm}\sum_{\substack{y \in \{-1,0,1\}^d, \\ |\{i \in [d]: |y_i|=1\}|=k, \\
    \supp(y) \cap \supp(z)= \emptyset}} e(y \cdot \eta)
\end{equation*}
and let
\begin{equation*}
    \widetilde{\phi_N^q}(\xi)=
    \frac{1}{|B_N^q\cap \Z^d|}\sum_{z \in Z(E)} (-1)^{\sum_{j=1}^d z_j} \hspace{-0.5cm}\sum_{n/2<k \leq n-\|z \|_q^q} \hspace{-0.5cm}(-1)^k\hspace{-0.6cm}\sum_{\substack{y \in \{-1,0,1\}^d, \\ |\{i \in [d]: |y_i|=1\}|=k, \\
    \supp(y) \cap \supp(z)= \emptyset}} e(y \cdot \eta).
\end{equation*}
Using exactly the same method as in the proof of \eqref{eq:3.5} one can show that 
    $|s_N^q(\xi)-\widetilde{\phi_N^q}(\xi)| \lesssim \frac{1}{n}+ \kappa_q(d,N)^{q/50}.$
Combining this with \eqref{eq:3.8} we have $|m_N^q(\xi)- \widetilde{\phi_N^q}(\xi)| \lesssim \frac{1}{n}+ \kappa_q(d,N)^{q/50}.$

It remains to show that 
\begin{equation} \label{eq:3.18}
    \Big|\widetilde{\phi_N^q}(\xi)- \frac{1}{|B_N^q \cap \Z^d|} \sum_{x \in B_N^q \cap \Z^d} (-1)^{\sum_{j=1}^d x_j}\Big| \lesssim \kappa_q(d,N)^q \| \xi+1/2\|^2 + \frac{1}{n}.
\end{equation}
This follows from \eqref{eq:3.7} and Proposition \eqref{prop:3.4}, indeed similarly as before we have
\begin{align*}
&\Big|\widetilde{\phi_N^q}(\xi)- \frac{1}{|B_N^q \cap \Z^d|} \sum_{x \in B_N^q \cap \Z^d} (-1)^{\sum_{j=1}^d x_j}\Big| \leq 
\Big|\widetilde{\phi_N^q}(\xi)- \frac{1}{|B_N^q \cap \Z^d|} \sum_{x \in E} (-1)^{\sum_{j=1}^d x_j}\Big|+ \frac{|B_N^q \cap \Z^d \setminus E|}{|B_N^q \cap \Z^d|} \\
&\lesssim \frac{1}{|B_N^q\cap \Z^d|}\Bigg|\sum_{z \in Z(E)} (-1)^{\sum_{j=1}^d z_j} \hspace{-0.5cm}\sum_{n/2<k \leq n-\|z \|_q^q} \hspace{-0.5cm}(-1)^k\bigg(\hspace{-0.5cm}\sum_{\substack{y \in \{-1,0,1\}^d, \\ |\{i \in [d]: |y_i|=1\}|=k, \\
    \supp(y) \cap \supp(z)= \emptyset}} e(y \cdot \eta) -1 \bigg)\Bigg|+ \frac{1}{n}  \\
    &\lesssim 
     \frac{1}{|B_N^q\cap \Z^d|}\sum_{z \in Z(E)}  \sum_{n/2<k \leq n-\|z \|_q^q} \bigg|\sum_{\substack{y \in \{-1,0,1\}^d, \\ |\{i \in [d]: |y_i|=1\}|=k, \\
    \supp(y) \cap \supp(z)= \emptyset}} e(y \cdot \eta)-1 \bigg|+ \frac{1}{n} \\
    &\lesssim \kappa_q(d,N)^q \| \eta \|^2 s_N^q(0)+ \frac{1}{n} \le \kappa_q(d,N)^q \|\xi+1/2\|^2+ \frac{1}{n},
\end{align*}
this finishes the proof \eqref{eq:3.18}, hence the proof of \eqref{eq:3.6}, thus finishes the entire proof of Proposition \ref{prop:3.5}.
\end{proof}
\section{Conclusions}
In this section by using Proposition \ref{prop:3.5} we will conclude the proof of Theorem \ref{thm:1.1}.
We begin by defining two auxiliary multipliers 
\begin{Defn}
    For any $q \geq 1$, $d \in \N$, $t>0$ we define $\lambda_{t}^{1,q},\lambda_{t}^{2,q}: \T^d \to \mathbb{C}$ by following formulas
    \[
\lambda^{1,q}_t(\xi)=e^ {- \kappa_q(d,t)^q  \| \xi \|^2 },
\]
\[
\lambda^{2,q}_t(\xi)= \frac{1}{|B_t^q \cap \Z^d|} \Big(\sum_{x \in B_t^q \cap \Z^d} (-1)^{\sum_{i=1}^d x_i} \Big) e^ {- \kappa_q(d,t)^q  \| \xi +1/2\|^2 }. 
\]
\end{Defn}
Proof of the following theorem is a consequence of the theory of symmetric diffusion semigroups, see  \cite[p. 73]{Stein+1970} and \cite[Theorem 2.11]{NW}.
\begin{Thm} \label{thm:4.2}
    There exists a constant $C>0$ such that for all $q \geq 1, d \in \N$ and $f \in \ell^2(\Z ^d)$ the following inequality holds
    \[ 
    \Big\| \sup_{t>0} \big|\mathcal{F}^{-1}(\lambda_t^{1,q} \widehat{f})\big|\Big\|_{\ell^2(\Z^d)} \leq C \|f \|_{\ell^2(\Z^d)}.
    \]
\end{Thm}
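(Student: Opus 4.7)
The plan is to identify $\{\lambda_t^{1,q}\}_{t>0}$ as the Fourier symbols of a time-reparametrized symmetric diffusion semigroup on $\ell^2(\Z^d)$ and then invoke the Hopf--Dunford--Schwartz maximal theorem. For $s \ge 0$ let $T_s$ be the Fourier multiplier operator defined by
\[
\widehat{T_s f}(\xi) = e^{-s\|\xi\|^2}\,\widehat{f}(\xi) = \prod_{j=1}^d e^{-s \sin^2(\pi\xi_j)}\,\widehat{f}(\xi).
\]
Because $-4\|\xi\|^2 = \sum_{j=1}^d (2\cos(2\pi\xi_j) - 2)$ is the symbol of the discrete Laplacian on $\Z^d$, the family $\{T_s\}$ is a rescaled discrete heat semigroup. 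Since the symbol factors over coordinates, $T_s$ is the $d$-fold tensor product of one-dimensional semigroups. Each one-dimensional factor, with symbol $e^{-s\sin^2(\pi\xi_j)} = e^{-s/2}\exp\bigl(\tfrac{s}{4}(e(\xi_j)+e(-\xi_j))\bigr)$, is convolution with a probability measure on $\Z$, the non-negativity of the kernel coefficients being visible from the Taylor expansion of the exponential. Consequently $T_s$ itself is convolution with a probability measure $p_s$ on $\Z^d$.

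From this representation $\{T_s\}_{s\ge 0}$ satisfies Stein's axioms of a symmetric diffusion semigroup: $T_0 = I$, the semigroup law $T_s T_u = T_{s+u}$ holds, $T_s$ is self-adjoint on $\ell^2(\Z^d)$ (its symbol is real), preserves positivity and constants, is contractive on every $\ell^p(\Z^d)$ for $p \in [1,\infty]$, and is strongly continuous on $\ell^2(\Z^d)$. The Hopf--Dunford--Schwartz maximal theorem (see \cite[p.~73]{Stein+1970}, and for the explicit dimension-free formulation \cite[Theorem 2.11]{NW}) then yields an absolute constant $C>0$ such that
\[
\Big\|\sup_{s>0}|T_s f|\Big\|_{\ell^2(\Z^d)} \le C\,\|f\|_{\ell^2(\Z^d)},
\]
independently of $d$, because the maximal bound comes from an abstract ergodic-theoretic argument insensitive to the underlying measure space.

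Finally I change variables $s = \kappa_q(d,t)^q = t^q/d$, which gives $\lambda_t^{1,q}(\xi) = e^{-s\|\xi\|^2}$ and thus $\mathcal{F}^{-1}(\lambda_t^{1,q}\widehat{f}) = T_s f$. For each fixed $q \ge 1$ and $d$, the map $t \mapsto s = t^q/d$ is a bijection of $(0,\infty)$ onto itself, so
\[
\sup_{t>0}\bigl|\mathcal{F}^{-1}(\lambda_t^{1,q}\widehat{f})\bigr| = \sup_{s>0}|T_s f|,
\]
and the desired estimate follows. There is essentially no obstacle in this proof: the content lies entirely in recognizing $\lambda_t^{1,q}$ as the Fourier multiplier of a symmetric Markov heat semigroup, after which dimension-freeness is automatic from Stein's abstract theorem.
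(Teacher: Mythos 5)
Your argument is correct and is essentially the same as the paper's, which simply cites the theory of symmetric diffusion semigroups (\cite[p.~73]{Stein+1970} and \cite[Theorem 2.11]{NW}); you merely make explicit the verification that $e^{-s\|\xi\|^2}$ is the symbol of a tensor-product convolution semigroup with non-negative kernel satisfying Stein's axioms, after the reparametrization $s=t^q/d$. The only cosmetic point is that the pointwise maximal bound $\sup_{s>0}|T_sf|$ is Stein's maximal theorem for symmetric diffusion semigroups (itself built on Hopf--Dunford--Schwartz) rather than the Hopf--Dunford--Schwartz theorem per se, but the reference you invoke is exactly the right one.
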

Notice that $\lambda_t^{2,q}(\xi)=\frac{1}{|B_t^q \cap \Z^d|} \Big(\sum_{x \in B_t^q \cap \Z^d} (-1)^{\sum_{i=1}^d x_i} \Big)\lambda_t^{1,q}(\xi+\frac{1}{2})$, where $\xi+\frac{1}{2}=(\xi_1 + \frac{1}{2},..., \xi_d+ \frac{1}{2})$. Because of this we have
\[
\sup_{\|f \|_{\ell^2(\Z ^d)}=1} \Big\| \sup_{t>0} \big|\mathcal{F}^{-1}(\lambda_t^{2,q} \widehat{f})\big|\Big \|_{\ell^2(\Z^d)}= \sup_{\|f \|_{\ell^2(\Z ^d)}=1} \Big\| \sup_{t >0} \big|\mathcal{F}^{-1}(\lambda_t^{1,q} \widehat{f})\big|\Big \|_{\ell^2(\Z^d)}.
\]
It turns out multipliers $\lambda_N^{1,q}, \lambda^{2,q}_N$ are good approximations to $m_N^q$, this is stated below.
\begin{Prop} \label{prop:4.3}
Let $q \geq 1$, $d,N \in \N$ satisfy $\kappa_q(d,N) \le e^{-12/q}$. Then for every $\xi \in \T^d$ we have \\
\begin{enumerate}
    \item If $\|\xi \| \leq \|\xi+1/2 \|$, then
    \begin{equation*}
    |m_N^q(\xi)-\lambda^{1,q}_N(\xi)| \lesssim \min \bigg( \kappa_q(d,N)^q \|\xi \|^2,  \Big(\kappa_q(d,N)^q \|\xi \|^2 \Big)^{-1} \bigg)+\kappa_q(d,N)^{q/50}+\frac{1}{N^{q}}.
    \end{equation*}

    \item If $\|\xi +1/2\| < \|\xi \|$, then
    \begin{equation*}
    |m_N^q(\xi)-\lambda^{2,q}_N(\xi)| \lesssim \min \bigg( \kappa_q(d,N)^q \|\xi +1/2\|^2,  \Big(\kappa_q(d,N)^q \|\xi +1/2\|^2 \Big)^{-1} \bigg)+\kappa_q(d,N)^{q/50}+\frac{1}{N^{q}}.
    \end{equation*}
\end{enumerate}
\end{Prop}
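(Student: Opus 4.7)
The plan is to deduce Proposition \ref{prop:4.3} directly from Proposition \ref{prop:3.5} by a routine splitting according to the size of $\kappa_q(d,N)^q\|\xi\|^2$ (resp.\ $\kappa_q(d,N)^q\|\xi+1/2\|^2$), combined with two elementary inequalities for the exponential, namely $|e^{-x}-1|\le x$ and $e^{-x}\lesssim 1/x$ for $x\ge 0$. The minimum appearing in the bound is a reflection of two competing regimes: near the ``center'' both $m_N^q$ and the approximant are close to the same constant, while away from the center both are small, and the minimum just encodes whichever of these two estimates is more efficient.

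For item (1), assume $\|\xi\|\le \|\xi+1/2\|$. In the \emph{small regime}, where $\kappa_q(d,N)^q\|\xi\|^2$ is bounded by an absolute constant, inequality \eqref{eq:3.5} of Proposition \ref{prop:3.5} gives $|m_N^q(\xi)-1|\lesssim \kappa_q(d,N)^q\|\xi\|^2+N^{-q}+\kappa_q(d,N)^{q/50}$, while $|\lambda_N^{1,q}(\xi)-1|=|e^{-\kappa_q(d,N)^q\|\xi\|^2}-1|\le \kappa_q(d,N)^q\|\xi\|^2$. The triangle inequality therefore furnishes the bound with the first argument of the minimum. In the \emph{large regime}, I would use \eqref{eq:3.4} together with the fact that $\min(\|\xi\|^2,\|\xi+1/2\|^2)=\|\xi\|^2$, to get $|m_N^q(\xi)|\lesssim e^{-c\kappa_q(d,N)^q\|\xi\|^2/320}+N^{-q}$; and trivially $|\lambda_N^{1,q}(\xi)|=e^{-\kappa_q(d,N)^q\|\xi\|^2}$. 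Absorbing the constant $c/320$ into an overall implicit constant and applying $e^{-x}\lesssim 1/x$ turns both into $\lesssim (\kappa_q(d,N)^q\|\xi\|^2)^{-1}+N^{-q}$, which yields the bound with the second argument of the minimum. Taking the smaller of the two bounds completes item (1).

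Item (2) is handled along identical lines, using now \eqref{eq:3.6} in place of \eqref{eq:3.5}. Set
\[
A_N=\frac{1}{|B_N^q\cap\Z^d|}\sum_{x\in B_N^q\cap\Z^d}(-1)^{\sum_{i=1}^d x_i},
\]
so that $\lambda_N^{2,q}(\xi)=A_N\, e^{-\kappa_q(d,N)^q\|\xi+1/2\|^2}$ and $|A_N|\le 1$. In the small regime, \eqref{eq:3.6} bounds $|m_N^q(\xi)-A_N|$ by the right-hand side of \eqref{eq:3.5} with $\|\xi+1/2\|$ in place of $\|\xi\|$, while
\[
|\lambda_N^{2,q}(\xi)-A_N|\le |A_N|\,|e^{-\kappa_q(d,N)^q\|\xi+1/2\|^2}-1|\le \kappa_q(d,N)^q\|\xi+1/2\|^2.
\]
In the large regime, in the present case $\|\xi+1/2\|<\|\xi\|$ so $\min(\|\xi\|^2,\|\xi+1/2\|^2)=\|\xi+1/2\|^2$, and \eqref{eq:3.4} gives exponential decay of $m_N^q$ in $\|\xi+1/2\|^2$; combined with the corresponding trivial decay of $\lambda_N^{2,q}$ and $e^{-x}\lesssim 1/x$, this yields the second argument of the minimum.

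There is no real obstacle here beyond bookkeeping: the proposition is essentially a repackaging of Proposition \ref{prop:3.5} in a form convenient for use in the maximal function estimates of the final section. The only minor point to keep track of is that the implicit constant in $e^{-x}\lesssim 1/x$ must absorb the factor $1/320$ coming from \eqref{eq:3.4}, which is harmless since the constant in the conclusion of Proposition \ref{prop:4.3} is allowed to be absolute.
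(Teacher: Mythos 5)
Your argument is correct and is exactly the route the paper takes: it only remarks that Proposition \ref{prop:4.3} follows from Proposition \ref{prop:3.5} together with the elementary inequalities $1-e^{-x}\le x$ and $e^{-x}\le x^{-1}$, which is precisely what you carry out (your regime split is just a way of saying that both bounds of the minimum hold and one takes the smaller).
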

Proof of the proposition above is a simple consequence of Proposition \ref{prop:3.5} together with elementary inequalities $1-e^{-x} \le x$, $e^{-x} \le x^{-1}$ for $x>0$.
Using Proposition \ref{prop:4.3} and a standard square function argument we can finally prove Theorem \ref{thm:1.4}.
\begin{proof}[Proof of Theorem \ref{thm:1.4}]
Take any $ q\geq 1$, $d \in \N$ and $f \in \ell^2(\Z^d)$. We decompose $f=f_1+f_2$, where
\[
\supp(\widehat{f_1}) \subseteq \{ \xi \in \T^d : \|\xi\| \leq \|\xi +1/2\|) \},
\quad
\supp(\widehat{f_2}) \subseteq \{ \xi \in \T^d : \|\xi\| > \|\xi +1/2\| \}.
\]
We will prove the following
\begin{equation} \label{eq:4.1}
    \Big\| \sup_{t \in \mathbb{D},  t \leq e^{-\frac{12}{q}}d^{1/q}} |\mathcal{M}_t^q f| \Big\|_{\ell^2(\mathbb{Z}^d)} \lesssim \| f \|_{\ell^2(\mathbb{Z}^d)},
\end{equation}
since $\mathcal{M}_t^q$ has $\ell^2$ norm at most $1$ and we have $|\D \cap (e^{-\frac{12}{q}}d,d]) | \le |\D \cap (e^{-12}d,d]) | \le \frac{12}{\log 2}+1 $, to establish Theorem \ref{thm:1.4} is suffices to prove \eqref{eq:4.1}. In fact we will show that for $i=1,2$ we have
\begin{equation} \label{eq:4.2}
    \Big\| \sup_{t \in \mathbb{D}, t \leq e^{-\frac{12}{q}}d^{1/q}} |\mathcal{M}_t^q f_i| \Big\|_{\ell^2(\mathbb{Z}^d)} \lesssim \| f_i \|_{\ell^2(\mathbb{Z}^d)}.
\end{equation}
We will consider only the case $i=1$, for $i=2$ proof is very similar. By Theorem \ref{thm:4.2} we have
\begin{equation} \label{eq:4.3}
    \begin{split}
         &\Big\| \sup_{t \in \mathbb{D}, t \leq e^{-\frac{12}{q}}d^{1/q}} |\mathcal{M}_t^q f_1| \Big\|_{\ell^2(\mathbb{Z}^d)} \le \Big\| \sup_{t \in \mathbb{D}, t \leq e^{-\frac{12}{q}}d^{1/q}} |\mathcal{F}^{-1}(\lambda_t^{1,q} \widehat{f_1})| \Big\|_{\ell^2(\mathbb{Z}^d)} \\
         &+ \Big\| \sup_{t \in \mathbb{D}, t \leq e^{-\frac{12}{q}}d^{1/q}} |\mathcal{F}^{-1}\big((m_t^q-\lambda_t^{1,q} )\widehat{f_1}\big)| \Big\|_{\ell^2(\mathbb{Z}^d)} \lesssim \|f_1 \|_{\ell^2(\Z^d)}+ \Big\| \sup_{t \in \mathbb{D}, t \leq e^{-\frac{12}{q}}d^{1/q}} |\mathcal{F}^{-1}\big((m_t^q-\lambda_t^{1,q} )\widehat{f_1}\big)| \Big\|_{\ell^2(\mathbb{Z}^d)}.
    \end{split}
\end{equation}
For the second term of the above we use a standard square function argument, i.e. Parseval's identity together with Proposition \ref{prop:4.3} and assumptions on $\supp(\widehat{f_1})$. We get
\begin{align*}
&\Big\| \sup_{t \in \mathbb{D}, t \leq e^{-\frac{12}{q}}d^{1/q}} |\mathcal{F}^{-1}\big((m_t^q-\lambda_t^{1,q} )\widehat{f_1}\big)| \Big\|_{\ell^2(\mathbb{Z}^d)} \leq 
\Big\| \Big( \sum_{t \in \mathbb{D}, t \leq e^{-\frac{12}{q}}d^{1/q}} |\mathcal{F}^{-1}\big((m_t^q-\lambda_t^{1,q} )\widehat{f_1}\big)|^2 \Big)^{1/2}\Big\|_{\ell^2(\mathbb{Z}^d)} \\
&= \Big(\hspace{-0.5cm} \sum_{t \in \mathbb{D}, t \leq e^{-\frac{12}{q}}d^{1/q}} \Big\|\mathcal{F}^{-1}\big((m_t^q-\lambda_t^{1,q} )\widehat{f_1}\big)\Big\|_{\ell^2(\mathbb{Z}^d)}^2 \Big)^{1/2}= 
\Big(\hspace{-0.5cm}\sum_{t \in \mathbb{D}, t \leq e^{-\frac{12}{q}}d^{1/q}} \int_{\T^d} |m_t^q(\xi)-\lambda_t^{1,q}(\xi)|^2|\widehat{f_1}(\xi)|^2 d\xi \Big)^{1/2} \\
&\lesssim \bigg(  \int_{\T^d} |\widehat{f_1}(\xi)|^2 
\cdot \bigg(\hspace{-0.5cm}\sum_{t \in \mathbb{D}, t \leq e^{-\frac{12}{q}}d^{1/q}} \hspace{-0.5cm}\min\bigg( \kappa_q(d,t)^q\|\xi \|^2,  \Big(\kappa_q(d,t)^q\|\xi \|^2 \Big)^{-1} \bigg)^2 + \hspace{-0.7cm}\sum_{t \in \D, t \leq e^{-\frac{12}{q}}d^{1/q}} \hspace{-0.7cm}\big(\frac{1}{t^{2q}}+ \kappa_q(d,t)^{q/25} \big)  \bigg)d\xi \bigg)^{1/2} \\
&\lesssim \bigg(  \int_{\T^d} |\widehat{f_1}(\xi)|^2 \bigg(\sum_{k=0}^{\infty} \min\bigg( \frac{2^{kq}}{d}\|\xi \|^2,  \Big(\frac{2^{kq}}{d}\|\xi \|^2 \Big)^{-1} \bigg)^2 + 1+ \sum_{k=0}^{ \lfloor \log_2(e^{-\frac{12}{q}}d^{1/q}) \rfloor} \frac{2^{kq/25}}{d} \bigg) d\xi \bigg)^{1/2} \\
&\lesssim  \bigg(  \int_{\T^d} |\widehat{f_1}(\xi)|^2 d\xi \bigg)^{1/2}=\|\widehat{f_1} \|_{L^2(\T^d)}= \|f_1 \|_{\ell^2(\Z^d)},
\end{align*}
above we have used the fact that for any $a \geq 2$ and $x \ge 0$ we have
$\sum_{k=0}^\infty \min (a^k x, a^{-k} x^{-1}) \lesssim 1,$
where the implied constant is universal. The above with \eqref{eq:4.3} establishes \eqref{eq:4.2} for $i=1$, as mentioned earlier the case $i=2$ is basically the same, but with $\lambda_t^{1,q}$ replaced by $\lambda_t^{2,q}$, so we skip its proof. Using triangle inequality, \eqref{eq:4.2} and Parseval's formula we have
\begin{align*}
    &\Big\| \sup_{t \in \mathbb{D}, t \leq e^{-\frac{12}{q}}d^{1/q}} |\mathcal{M}_t^q f| \Big\|_{\ell^2(\mathbb{Z}^d)} 
    \leq \Big\| \sup_{t \in \mathbb{D}, t \leq e^{-\frac{12}{q}}d^{1/q}} |\mathcal{M}_t^q f_1| \Big\|_{\ell^2(\mathbb{Z}^d)}+ \Big\| \sup_{t \in \mathbb{D}, t \leq e^{-\frac{12}{q}}d^{1/q}} |\mathcal{M}_t^q f_2| \Big\|_{\ell^2(\mathbb{Z}^d)} \\
    &\lesssim  \|f_1\|_{\ell^2(\Z^d)}+\|f_2\|_{\ell^2(\Z^d)}= 
   \|\widehat{f_1}\|_{L^2(\T^d)}+\|\widehat{f_2}\|_{L^2(\T^d)} \lesssim \|\widehat{f} \|_{L^2(\T^d)}= \| f \|_{\ell^2(\Z^d)},
\end{align*}
this concludes the proof of \eqref{eq:4.1} and hence the proof of Theorem \ref{thm:1.4}.
\end{proof}

\normalsize
\subsection*{Acknowledgements}
This research was funded by the National Science Centre, Poland, grant Sonata Bis
2022/46/E/ST1/00036. For the purpose of Open Access the authors have applied a CC BY public copyright licence
to any Author Accepted Manuscript (AAM) version arising from this submission

\end{document}